\DeclareMathAlphabet      {\mathbf}{OT1}{cmr}{bx}{n}
\DeclareFontFamily{OT1}{pzc}{}
\DeclareFontShape{OT1}{pzc}{m}{it}%
{<-> s * [1.15] pzcmi7t}{}
\DeclareMathAlphabet{\mathpzc}{OT1}{pzc}{m}{it}
\newtheorem{thm}{Theorem}
\newtheorem{theorem}{Theorem}[section]
\newtheorem{lemma}[theorem]{Lemma}
\newtheorem{corollary}[theorem]{Corollary}
\newtheorem{definition}[theorem]{Definition}
\newtheorem{prop}[theorem]{Proposition}
\newtheorem{remark}{Remark}
\newcommand\mathitem{\item\leavevmode\vspace*{-\dimexpr\baselineskip+\abovedisplayskip\relax}}
\DeclareMathOperator*\uplim{\overline{lim}}
\begin{document} \small
	\pagenumbering{arabic}

	\title{Local behaviour of sequences of  three--dimensional generalised monopoles}
	\author{{Guanheng Chen}}
	\date{}
	\maketitle

	

	\begin{abstract}
		The purpose of this paper is to  study the behaviour  of  sequences   of   generalised  monopoles with a uniform  bound on a certain $L^2$--norm.   We focus on the  case  that  the target  hyperK\"{a}hler manifolds   are Swann bundles. 
		In 3--dimensional case, suppose that there exists an open submanifold $Y'$ such that 
		the hyperK\"{a}hler potential along the monopoles has a uniform lower bound over $Y'$. Then we show that  there exist  convergent subsequences  of  generalised monopoles over any compact subset of $Y'$.  Under  similar assumptions,  the same conclusion holds for the generalised harmonic spinors in dimension four.
	\end{abstract}
	
	\section{Introduction and Main results}
	Given a Riemannian manifold $(Z, g_Z)$ of dimension three or four,    fix a $Spin^c$ structure $Q\to Z$.  The Seiberg--Witten equations are    non--linear PDE  associated with this $Spin^c$ structure.   A solution to the   equations consists  of a $Spin^c$ connection and a section  of an associated bundle of $Q$.   The latter is called a  \emph{spinor}.  Note that the typical fiber of the associated bundle  is  the quaternionic space  $\mathbb{H}$.

	In \cite{T0} and \cite{VYP}, C. H. Taubes  and V. Y. Pidstrygach generalise    these equations to more general settings. These generalisations are called \emph{generalised Seiberg--Witten equations}. We refer to their solutions   as \emph{(generalised) monopoles}.  The idea behind the generalisation is shown in  the following two aspects.  Firstly, the $Spin^c$ group is replaced by a more general Lie group  called the $Spin^G$ group. Here $G$ can be any fixed compact Lie group. Secondly,  the typical  fiber $\mathbb{H}$ of the  associated bundle    is replaced by  a hyperK\"{a}hler  manifold $(M, g_M, I_1, I_2, I_3)$ with   certain symmetries.  Such a manifold is called a \emph{target manifold}. In particular, $u$ is a section of a fiber bundle and the fiber is not necessarily   a vector space. Many well--known equations in gauge theory are just special cases of the generalised Seiberg--Witten equations. (See Section 5.2 of \cite{MC1}.)  

	Compactness property  is  one of the central  problems  when  we study the moduli space  of non--linear equations. In the case that  the target manifold  is an $\mathbb{H}$--module, many compactness  results have been achieved by many authors, e.g. \cite{T2},  \cite{HW}, and \cite{WZ}.    Their methods are based on Taubes's techniques  in \cite{T1}. For the general cases,      the generalised Seiberg--Witten  equations are highly non--linear. Other aspects of the  generalised Seiberg--Witten equations  have been studied by   many  mathematicians as well, eg. \cite{AH}, \cite{BT},  \cite{VT}, etc.

	The purpose of this paper is to  study the behaviour  of   sequences  of    generalised  monopoles with  a uniform bound on a  certain   $L^2$--norm.  Such a  $L^2$--norm is  an analogue   of the  usual $L^2$--norm of spinors. We focus on the  cases  that  the target manifold  belongs to a class of  hyperK\"{a}hler manifolds with  hyperK\"{a}hler  potentials  called Swann bundles \cite{AS}. Roughly speaking, a Swann bundle $\mathcal{U}(N)$ is a fiber bundle over a quaternionic K\"{a}hler manifold $N$ and the fiber is of the form $\mathbb{H}^{\times} /\mathbb{Z}_2$. \textbf{We assume that $N$ is compact; unless otherwise stated.}
	Alternatively, a  Swann bundle can be viewed as  a metric cone of a 3--Sasaki manifold. Given a sequence of 3--dimensional generalised monopoles, suppose that the images  of the spinor part of the monopoles stay away from the singularity of the metric cone, then
	we show that there exists  a convergent subsequence.    Besides, the methods can be applied to 4--dimensional cases  equally well whenever the Lie group $G$ is zero--dimensional. In these cases, the generalised Seiberg--Witten equations are  reduced to the generalised Dirac equation whose  solutions  are  called \emph{harmonic spinors}.

	The main results of this paper  are summarised as follows:
	\begin{thm} \label{thm0}
		Let $(Y, g_Y)$ be a Riemannian 3--manifold with a $Spin^G$ structure $\pi: Q \to Y$. Suppose that the  target manifold  $(M, g_M, I_1, I_2, I _3) $ is a    Swann bundle $\mathcal{U}(N)$ over  a compact
		quaternionic $K\ddot{a}hler$ manifold $N$. Let $\rho_0$ be the unique  hyperK\"{a}hler  potential defined in Section \ref{section2}.
		
		Let $\{(A_n, u_n)\}_{n=1}^{\infty}$ be a sequence of  generalised monopoles (see Equations (\ref{eq6})) defined by   $\pi: Q \to Y$ and  the Swann bundle. Suppose that $\{(A_n, u_n)\}_{n=1}^{\infty}$ has a  uniform bound on the $L^2$--norm, i.e.,
		$\int_{Y} \rho_0 \circ u_n \le  c_{\heartsuit}$.  Assume that $Y' \subset Y $  is  an open   submanifold of  $Y$ such that     $ \uplim\limits_{n \to \infty} \inf\limits_{Y'}\rho_0 \circ u_n  \ge  c_{\diamondsuit}^{-1}$ for some large constant $c_{\diamondsuit}>0$. Then after gauge transformations, there exists a subsequence of  $\{(A_n, u_n)\}_{n=1}^{\infty}$ that  converges  in $C^{\infty}_{loc}$ to a monopole  $(A, u)$ over $Y'$.
	\end{thm}

	\begin{thm} \label{thm1}
		Let $(X, g_X)$ be a Riemannian 4--manifold with a $Spin^G$ structure $\pi: Q \to X$. Suppose that the Lie group $G$ is zero--dimensional.   Let    $(M, g_M, I_1, I_2, I _3) $ and $\rho_0$  be the Swann bundle  and   the  hyperK\"{a}hler  potential respectively as in Theorem \ref{thm0}.
		
		Let $\{u_n\}_{n=1}^{\infty}$ be a sequence of harmonic spinors (see Definition \ref{def2}) defined by  $\pi: Q \to X$ and the  Swann bundle. Suppose that    $\int_{X} \rho_0 \circ u_n \le c_{\heartsuit}$ and  $X' \subset X $ is  an open   submanifold of $X$  such that     $ \uplim\limits_{n \to \infty} \inf\limits_{X'}\rho_0 \circ u_n  \ge c_{\diamondsuit}^{-1}$ for some constant $c_{\diamondsuit}>0$. Then there exists a subsequence of  $\{u_n\}_{n=1}^{\infty}$ that  converges  in $C^{\infty}_{loc}$ to a  harmonic spinor $u$ over $X'$. 
	\end{thm}

	\begin{remark}
		Under assumptions $\int \rho_0 \circ u_n \le c_{\heartsuit}$ and   $ \uplim\limits_{n \to \infty} \inf\limits_{Y'}\rho_0 \circ u_n  \ge c_{\diamondsuit}^{-1}$,  we can   deduce a uniform bound on the $L^{\infty}$--norm of  the curvatures and  the $W^{1,2}$--norm of  the spinors. One may apply the Uhlenbeck compactness to get a convergent subsequence of the connections. However, the standard elliptic bootstrapping argument cannot extract  a convergent  subsequence of the harmonic spinors, because the generalised Dirac operator is highly non--linear. The constants that appear in the elliptic estimates are not uniform. (See pages 55--56 of \cite{HS}.)
	\end{remark}
	
	Our methods here are  inspired by  \cite{TW}   and   we  also make use of  the monotonicity property of the frequency function as in \cite{HW}.  A brief summary  of the proof is given in Section \ref{section3}. 
	
	\begin{remark}
		For an arbitrary sequence of monopoles $\{(A_n, u_n)\}_{n=1}^{\infty}$ there may be no $Y'$ ($X'$) as in Theorem \ref{thm0} (\ref{thm1}).  For example, if $ {\uplim\limits_{n\to \infty}}\int  \rho_0 \circ u_n =0$, then we cannot find such  submanifolds  $Y'$  and  $X'$.
		
		The existence of the  submanifolds  $Y'$  and  $X'$ is highly non--trivial to verify. Here is a possible way to    detect the existence of  $Y'$ and $X'$.      One can define  closed sets $S_{n,k} =\{x \in Y, \vert \rho_0 \circ u_n(x) \le \frac{1}{k} \}$. Then for any fixed $k$, $\{S_{n,k}\}_{n=1}^{\infty}$ converges to a closed set $S_k$ in Hausdorff distance. 
		Define  a set $S: = \cap_{k=1}^{\infty}S_k$. If $S \ne Y$, then there exists $k_0$ such that $S_{k_0} \ne Y$. As a result, the closed set $S_{k_0}^{\epsilon} =\{x \in Y \vert dist(x, S_{k_0}) \le \epsilon\}$ is not equal to $Y$ for sufficiently small $\epsilon>0$. By the definition of   the   Hausdorff distance, any open submanifold  $Y'   \subset Y-S_{k_0}^{\epsilon}$ lies inside  $ Y-S_{n, k_0}$ for sufficiently large $n$. In particular, $\rho_0 \circ u_n(y) \ge {k_0}^{-1} $ for any $y \in Y'$. 
		
		Another possible way is to follow the idea in \cite{HW}, \cite{T1}, \cite{T2}.  One can define an  $L^{\infty}$--function $\rho$ by the rule  that $\rho(x)=\uplim\limits_{n \to \infty} \rho_{0} \circ u_n(x)$. 
		Similar to Proposition 2.1 of \cite{T1},  $\{\rho_{0} \circ u_n\}_{n=1}^{\infty}$ converges   weakly to $\rho$ in the  $W^{1, 2}$--topology and strongly in  the $L^p$--topology ($1 \le p< \infty$).  The proof of Proposition 2.1 of \cite{T1} only relies on a priori estimates and the $Weitzenb\ddot{o}ck$  formula.  The relevant estimates  and  the $Weitzenb\ddot{o}ck$  formula are true in our setting (see Section \ref{section4}). Thus
		the argument can be applied to our case equally well and  we do not repeat them here. Suppose that $c_{\heartsuit}^{-1} \le \int_Y \rho_0 \circ u_n  \le c_{\heartsuit}$ for some $c_{\heartsuit}>0$. If one can show that $\rho$ is continuous, then  the $L^p$--convergence implies that  $Y-\rho^{-1}(0)$ is a non--empty open set.
		

		Unfortunately, at present,  we can't find a sufficient condition to guarantee that $S \ne Y$ or $\rho$ is continuous.  It is interesting to study the  properties of $\rho$ and $S$ in the future.
	\end{remark}

	\section{Preliminaries}
	In this section, we review some  essential definitions  based on the papers \cite{MC}, \cite{MC1}, and \cite{HS}.
	\subsection{ $Spin^G$ group  and $Spin^G$ structure}
	Let $G$ be a compact Lie group and $\varepsilon$ be  a central element of $G$ satisfying $\varepsilon^2=1$. The  $Spin^G$ group is defined by $$Spin^G_{\varepsilon} (n): =\frac{Spin(n) \times G}{<(-1, \varepsilon)>}. $$
	Here are some examples: $Spin^{U(1)}_{-1}(n)=Spin^c(n)$,  $Spin^{
		\mathbb{Z}_2}_{-1}(n) =Spin(n)$,  $Spin^{\{e\}}_{-1}(n)=SO(n)$ and $Spin^G_1(n) =SO(n) \times G$.
	Similar to the $Spin^c$ case, we have the following  exact sequence:
	\begin{equation} \label{eq18}
		1 \to <(1, \varepsilon )>\to Spin_{\varepsilon}^G(n) \stackrel{\phi_0}{\longrightarrow}  SO(n) \times \bar{G} \to 1,
	\end{equation}
	where $\bar{G} = \frac{G}{\{1, \varepsilon\}}$.

	Let $Sp(1)$ denote the unit sphere in $\mathbb{H}$. We have isomorphisms $Spin(3)\cong  Sp(1)$ and  $Spin(4)\cong  Sp(1) \times Sp(1)$. To distinguish the copies of $Sp(1)$ in $Spin(4)$, we  denote the first one by $Sp(1)_+$ and the second one by $Sp(1)_-$.
	
	We focus on the cases that $n=3$ or $4$ and denote  the  $Spin^G_{\varepsilon} (n)$ group by $H$ throughout.
	
	\begin{definition}
		Let $P_{SO(n)}$ be the frame bundle of a Riemannian manifold $(Z, g_Z)$.  Let  $P_{\bar{G}} \to Z$ be a principal  $\bar{G}$--bundle.  A $Spin_{\varepsilon}^G(n)$ structure is a principal   $Spin_{\varepsilon}^G(n)$--bundle $Q  \to Z$  together with   a   covering     $\phi: Q \to  P_{SO(n)} \times_Z P_{\bar{G}}$ such that $\phi(pg)=\phi(p)\phi_0(g) $ for any $p\in Q $ and $g\in Spin^G_{\varepsilon}(n)$, where $\phi_0: Spin_{\varepsilon}^G(n) \to SO(n) \times \bar{G} $ is the  covering in (\ref{eq18}).
		
	\end{definition}
	
	\subsection{Target hyperK\"{a}hler  manifold  }
	A \emph{hyperK\"{a}hler   manifold} is a Riemannian manifold $(M, g_M)$  endowed with a triple of complex structures $\{I_i\}_{i=1}^3$ which are covariantly constant with respect to the Levi--Civita  connection $\nabla^M$ and   satisfy  the quaternionic relations.  Note that  the tangent bundle $TM$  is  a  bundle of $\mathbb{H}$--module because we have the following ring homomorphism
	\begin{equation}
		\begin{split}
			I: &\mathbb{H} \to End{TM}\\
			&\zeta=h_0 + h_1 i +h_2 j + h_3 k \to I_{\zeta} =h_0 Id_{TM} + h_1 I_1 + h_2 I_2 + h_3 I_3.
		\end{split}
	\end{equation}
	Observe that $I_{\zeta}$ is still a complex structure whenever $|\zeta|=1$ and $  \zeta  \in Im \mathbb{H}$.
	Under the isomorphism  $\mathfrak{sp}(1) \cong Im \mathbb{H}$, the hyperK\"{a}hler  form $\omega \in \Omega^2(M, \mathfrak{sp}(1)^*)$ is defined by $\omega_{\zeta} =g_M( \cdot, I_{\zeta}\cdot)$ for any $\zeta \in \mathfrak{sp}(1) \cong  Im\mathbb{H}  $.

	\subsubsection{Actions on \emph{hyperK\"{a}hler manifolds}}\label{section1}
	\begin{definition}
		Let $(M, g_M,  I_1, I_2, I_3)$ be a hyperK\"{a}hler   manifold with an isometric $Sp(1)$--action.  The $Sp(1)$--action is called permuting if
		\begin{equation*}
			dq I_{\zeta} dq^{-1}= I_{q \zeta \bar{q}},
		\end{equation*}
		for any $q \in Sp(1)$, where $dq: T_xM \to T_{qx} M$ is the differential of this action   and $\zeta \in Im \mathbb{H}$ with $|\zeta|=1$.
		
	\end{definition}

	Assume that $M$ admits a \emph{hyperK\"{a}hler  $G$--action}, i.e.,  it preserves the metric and complex structures.   The hyperK\"{a}hler  $G$--action is called \emph{hyperHamiltonian} if   it is Hamiltonian with respect to each  $\omega_{\zeta}$.    Then there  is a $G$--equivariant map  $\mu: M \to \mathfrak{sp}(1)^* \otimes \mathfrak{g}^*$  such that
	\begin{equation}
		\begin{split}
			\iota_{K^{M, G}_{\xi} } \omega_{\zeta}= <d \mu, \zeta \otimes \xi>,
		\end{split}
	\end{equation}
	where $\zeta \otimes \xi \in \mathfrak{sp}(1) \otimes \mathfrak{g} $ and $K^{M, G}_{\xi}\vert_m :=\frac{d}{dt} exp(t \xi) \cdot m \vert_{t=0}$ is the fundamental vector field.
	Such a map $\mu$ is called  a \emph{hyperK\"{a}hler  moment map}.
	
	Assume that    $(M, g_M,   I_1, I_2, I_3)$ admits a permuting $Sp(1)$--action and a   hyperK\"{a}hler $G$--action.   Suppose  that the   $Sp(1)$--action and the   hyperK\"{a}hler $G$--action  satisfy the following assumptions:
	\begin{enumerate}
		\item
		The $G$--action commutes with the $Sp(1)$--action.
		\item
		Let  $\varepsilon$ be  a central element of $G$ satisfying $\varepsilon^2=1$. We require  that the element $(-1, \varepsilon ) \in  Sp(1) \times G$ acts trivially on $M$.
	\end{enumerate}
	Under these assumptions,  we have an  $Sp(1) \times G$--action on $M$ and it descends to a   $Spin^G_{\varepsilon} (3)=\frac{Sp(1) \times G}{<(-1, \varepsilon)>} $--action.  This action is called a \emph{permuting $Spin^G_{\varepsilon} (3)$--action}.  A $Spin^G_{\varepsilon} (4)= \frac{Sp(1)_+ \times Sp(1)_- \times G}{<(1, \varepsilon)>}$--action is called  permuting  if  the $Sp(1)_+$--action is permuting while the $Sp(1)_- \times G$--action is hyperK\"{a}hler.


	
	\subsubsection{HyperK\"{a}hler potenial} \label{section2}
	Let  $(M, g_M,   I_1, I_2, I_3)$  be a hyperK\"{a}hler manifold. 
	A function $\rho \in C^{\infty}(M, \mathbb{R})$ is called \emph{a hyperK\"{a}hler   potential} if  it satisfies $dI_{\zeta} d \rho=2 \omega_{\zeta}$ for any $\zeta \in \mathfrak{sp}(1)$ with $|\zeta|=1$.     Here $I_{\zeta}$ acts on $d\rho$ by $I_{\zeta} d \rho (\cdot) : = d\rho (I_{\zeta}\cdot)$.
	
	Suppose that  $(M, g_M,   I_1, I_2, I_3)$  admits    a permuting   $Sp(1)$--action.  By \cite{VYP} (also see Proposition 2.2.7 of \cite{MC1}),    the hyperK\"{a}hler form $\omega$ is exact. Define a map $\chi: \mathfrak{sp}(1) \otimes  \mathfrak{sp}(1) \to \Gamma(M, TM)$ by
	\begin{equation*}
		\zeta \otimes \zeta' \to -I_{\zeta'} K^{M, Sp(1)}_{\zeta},
	\end{equation*}
	where $K^{M, Sp(1)}_{\zeta}$ is the fundamental vector field of the $Sp(1)$--action.
	Decompose $\chi$ into $\chi_0 + \chi_1 + \chi_2$, where $\chi_0=-\frac{1}{3}\sum_{l=1}^3 I_l  K^{M, Sp(1)}_{\zeta_l}$ is its diagonal,  $\chi_1$ is its antisymmetric part and $\chi_2$ is its trace--free symmetric part.  The following lemma gives a sufficient  condition  of the existence of a   hyperK\"{a}hler  potential.

	\begin{lemma}[Proposition 5.5 of \cite{AS}, Lemma 3.2.3 of \cite{HS}]
		Let $M$ be a hyperK\"{a}hler  manifold with a  permuting   $Sp(1)$--action. Suppose that $\chi_2=0$. Then there is a unique hyperK\"{a}hler  potential  $\rho_0$ such that
		$\rho_0 = \frac{1}{2} g_M(\chi_0, \chi_0). $ 
		
		Conversely, if $M$ admits a hyperK\"{a}hler potential, then $M$ admits a local permuting $Sp(1)$--action with $\chi_2=0$.
	\end{lemma}
	The choice of a   hyperK\"{a}hler  potential is not unique. We fix the choice provided by the  above lemma throughout.


	\subsection{Swann bundle}
	The hyperK\"{a}hler manifolds admitting  a  permuting   $Sp(1)$--action and a hyperK\"{a}hler  potential are the Swann bundles.  This class of hyperK\"{a}hler manifolds  is constructed by A. Swann \cite{AS}.  We briefly review the construction of Swann bundles and their properties.

	Let $(N, g_N)$ be a  \emph{quaternionic K\"{a}hler manifold} of dimension $4n$, i.e., its holonomy is contained in $Sp(n)Sp(1):=(Sp(n) \times Sp(1)) / (\pm 1 )$.  Let $F$ denote the $Sp(n)Sp(1)$--reduction of the $SO(4n)$ frame bundle of $N$.  Then $\mathscr{C}(N) : = F/Sp(n) $ is a principal $SO(3)$--bundle over $N$.      The  \emph{Swann bundle} is defined by
	\begin{equation}\label{eq15}
		\mathcal{U}(N) : = \mathscr{C}(N) \times_{SO(3)} (\mathbb{H}^{\times} /\mathbb{Z}_2),
	\end{equation}
	where $\mathbb{H}^{\times} :=\mathbb{H}-\{0\}$.
	Suppose that the scalar curvature of $(N, g_N)$  is positive, then  the   Swann bundle is a  hyperK\"{a}hler  manifold with  a permuting $SO(3)$ or $Sp(1)$--action and vanishing $\chi_2$. 
	The metric on $\mathcal{U}(N)$ is  of the form 
	$$g_{\mathcal{U}(N)} =dr^2 +r^2g_{\mathscr{C}(N)},$$  where $r$ is the radius coordinate of $\mathbb{H}^{\times}$. 

	What follows  are two facts related to the desired  properties in Section \ref{section1} and \ref{section2}:
	\begin{enumerate} [label=\textbf{F.\arabic*}]
		\item  \label{a}
		The Swann bundle $\mathcal{U}(N)$ is a  hyperK\"{a}hler  manifold with $\chi_2 =0$. In this case, the hyperK\"{a}hler  potential is $\rho_0 = \frac{1}{2} r^2$.
		
		\item \label{b}
		If a Lie group $G$
		acts on $N$, preserving the quaternionic K\"{a}hler structure, then the action can be lifted to a hyperHamiltonian action of $G$ on $\mathcal{U}(N) $. Also, the action  leaves  $ \mathscr{C}(N)$ invariant.  (See  Proposition 4.2  and Theorem 4.6 of \cite{AS}.)
	\end{enumerate}
	For more details about the  construction of $\mathcal{U}(N)$ and its properties,  we refer the  reader to Swann's original paper \cite{AS}.

	We remark that a class of compact quaternionic K\"{a}hler manifolds with  positive scalar curvature is the so--called    \emph{Wolf spaces}.   The Wolf spaces are the only  compact, homogeneous, quaternionic K\"{a}hler  manifolds. They are classified by Wolf \cite{JW}  and  Alekseevskii   \cite{DVA1}, \cite{DVA2}.  Some examples of Wolf spaces are as follows:
	\begin{equation*}
		\begin{split}
			& \mathbb{HP}^n=\frac{Sp(n+1)}{Sp(n) \times Sp(1)},  Gr_2(\mathbb{C}^n)= \frac{SU(n)}{S(U(n-2) \times U(2))}, \\
			&  \widetilde{Gr}_2(\mathbb{R}^n)= \frac{SO(n)}{S(SO(n-4) \times SO(4))}, \frac{G_2}{SO(4)}.
		\end{split}
	\end{equation*}

	\paragraph {Examples} Even we mainly  use the properties \ref{a}, \ref{b} of Swann bundles in this paper, it is still worth  reviewing   a class of examples $\mathcal{O}$ of Swann bundles.   The Swann bundle $\mathcal{O}$ is obtained by    P. B. Kronheimer  as a  moduli space of Nahm equations \cite{K}. The hyperK\"{a}hler structure on $\mathcal{O}$ is written  down by P. Kobak and Swann explicitly in  \cite{KS}.  The description
	that follows of Swann bundles paraphrases what is presented in  \cite{AS} and \cite{KS}:

	Let $G$ be a compact, simply connected, simple Lie group. Let $G^{\mathbb{C}}$ be its complexification.  Let $\mathfrak{g}$  and  $\mathfrak{g}^{\mathbb{C}}$ denote the Lie algebra of  $G$ and  $G^{\mathbb{C}}$  respectively.  Fix a real structure $\sigma$ on $\mathfrak{g}^{\mathbb{C}}$  such that $\mathfrak{g}$ is the eigenspace to the eigenvalue $1$.  Choose  a Cartan subalgebra $\mathfrak{h}$ of $\mathfrak{g}^{\mathbb{C}}$.   For example,  we could  take $G=SU(2)$,  $G^{\mathbb{C}} =SL(2, \mathbb{C})$, and $\sigma$ to be the minus conjugate transpose of matrices.  The Lie algebra $\mathfrak{sl}(2, \mathbb{C})$ has a  Cartan basis
	\begin{equation*}
		E=\left( \begin{matrix}
			0 & 1\\
			0 & 0\\
		\end{matrix}\right),
		H=\left( \begin{matrix}
			1 & 0\\
			0 & -1\\
		\end{matrix}\right),
		F=\left( \begin{matrix}
			0 & 0\\
			1 & 0\\
		\end{matrix}\right).
	\end{equation*}

	Fix a system of roots  $\Delta$ with positive roots $\Delta_+$.  Let $\alpha \in \Delta_+$ be a highest root.  Then  $\alpha$ induces a Lie algebra embedding $\mathfrak{sl}(2, \mathbb{C})    \hookrightarrow \mathfrak{g}^{\mathbb{C}}$.   The image of $\{E, H, F\}$ is denoted by $\{E_{\alpha}, H_{\alpha}, F_{\alpha}\}$. We choose the embedding such that it is  compatible with $\sigma$, in the sense that $\sigma(E_{\alpha}) =-F_{\alpha}$ and $\sigma(H_{\alpha})=-H_{\alpha}$.    Let $\mathcal{O}$ be the orbit of $E_{\alpha}$ under the adjoint action of $G^{\mathbb{C}}$.  
	The  hyperK\"{a}hler  structure and  the $Sp(1)$--action are given   as follows:
	
	The complex structure $I$ on $G^{\mathbb{C}}$ descends to the orbit $\mathcal{O}$.  Fix a negative Killing form $<, >$ on $\mathfrak{g}^{\mathbb{C}}$.  Define a  function  $\eta: \mathcal{O} \to \mathbb{R}$ by $$\eta \vert_X : = |X|^2 = <X, \sigma X>.$$
	Note that $\eta$ is $G$--invariant.   Let $\rho_0(X):=|	E_{\alpha}|\sqrt{\eta} $.  Then we can define a  K\"{a}hler  metric on $\mathcal{O}$ by
	\begin{equation*}
		\begin{split}
			&\omega_I: =\frac{1}{2} d (\rho'_0 I d\eta)\\
			&g(\xi_A, \xi_B) \vert_X=\omega_I(I\xi_A,  \xi_B) \vert_X=2Re( \rho_0'<\xi_A, \sigma \xi_B> + \rho_0''<\xi_A, \sigma X> <\sigma \xi_B, X>),
		\end{split}
	\end{equation*}
	where $A, B \in \mathfrak{g}^{\mathbb{C}}$, $\xi_A=[A, X] , \xi_B=[B, X]$ is the fundamental vector field at $X$, and $\rho_0'= \frac{d}{d\eta} \rho_0, \rho_0''= \frac{d^2}{d^2\eta} \rho_0$.
	
	On the orbit $\mathcal{O}$, the complex symplectic    form $\omega_c$ of Kirillov, Kostant and Souriau   is given by
	\begin{equation*}
		\omega_c(\xi_A, \xi_B) \vert_X: =<X, [A, B]>.
	\end{equation*}
	Write $\omega_c = \omega_J + i\omega_K$.   The $2$--forms $\omega_J, \omega_K$ are expected to serve  as the other two K\"{a}hler  forms.  Therefore, we  get
	\begin{equation*}
		J\xi_A \vert_X = -2 \rho_0'[X, \sigma \xi_A]-2\rho_0'' <\sigma \xi_A, X> [X, \sigma X]
	\end{equation*}
	from the relation $g(\xi_A, \xi_B) =\omega_J(J\xi_A, \xi_B)$. We can deduce a formula  for $K$ similarly.  They satisfy the quaternionic relation $IJ=K$. Also, the computation in Theorem 5.2 of \cite{KS} shows that $J, K$ are almost complex structures.  By Lemma 2.2 of \cite{NJH} (on page 64), $(\mathcal{O}, g, I, J, K)$ is  a   hyperK\"{a}hler  manifold. Proposition  5.6 in  \cite{AS} implies that $\rho_0$ is a hyperK\"{a}hler potential.

	Define an action of   $\mathbb{H}^{\times}$ on $\mathcal{O}$ that   is generated by vector fields $\nabla \rho_{0}$, $I\nabla \rho_0$, $J\nabla \rho_0$ and  $K\nabla \rho_0$. The argument in Proposition  5.5 of \cite{AS} shows that this  $Sp(1) \subset \mathbb{H}^{\times}$--action  is permuting.  After mod out the  $\mathbb{H}^{\times}$--action, the quotient is the Wolf space.

	\subsection{Clifford multiplication  }
	Recall that we set $H=Spin_{\varepsilon}^G(n)$.  Let $C^{\infty}(X ,Y)^H$ denote the space of $H$--equivariant maps from $X$ to $Y$.   The Clifford multiplication in dimension three is a  $Spin_{\varepsilon}^G(3) $--equivariant homomorphism
	\begin{equation*}
		c_3: C^{\infty}(Q, (\mathbb{R}^3)^* \otimes TM)^H \to  C^{\infty}(Q, TM)^H.
	\end{equation*}
	Under the isomorphism  $(\mathbb{R}^3)^*  \cong Im \mathbb{H}$, the Clifford multiplication  is  $c_3(h \otimes v)  =  I_{\bar{h}} v$.
	
	In 4--dimensional  case, the scalar multiplication
	$$Im\mathbb{H} \to End(TM),  \ h \to I_h$$
	extends to an   $H$--equivariant map $Cl_3 \to End(TM)$, i.e., $TM$ is a $Cl_3$--module. Identify $Cl_4^0$ with $Cl_3$. Define
	$$E: =Cl_4 \otimes_{Cl_4^0} (TM, I_1). $$
	The splitting $Cl_4=Cl_4^0 \oplus Cl_4^1$ induces a decomposition  $E=E^+ \oplus E^-$, where $E^+=Cl^0_4 \otimes_{Cl_4^0} (TM, I_1)$ and $E^-=Cl^1_4 \otimes_{Cl_4^0} (TM, I_1)$.
	They are respectively analogues of the $Spin^c$ bundles $S_+$ and $S_-$.  Each of them is  a copy of $TM$ while admitting   different  $Spin_{\varepsilon}^G(4)$--actions. 
	The Clifford multiplication  is  a  $Spin_{\varepsilon}^G(4)$--equivariant map $c_4: C^{\infty}(Q,  (\mathbb{R}^4)^*)^H \to End(E^+ \oplus E^-)$ defined by
	\begin{equation}
		\begin{split}
			c_4(e_0) =
			\begin{pmatrix}
				0 & -1\\
				1 & 0\\
			\end{pmatrix} \mbox{ \ and \ }
			c_4(e_l)=
			\begin{pmatrix}
				0 & -I_l\\
				-I_l & 0\\
			\end{pmatrix},
		\end{split}
	\end{equation}
	where $\{e_0, \{e_l\}_{l=1}^3\}$ is    the standard basis of $(\mathbb{R}^4)^*$.

	\subsection{Generalised Dirac operator }
	Let $\mathcal{N}=C^{\infty}(Q, M)^H$  denote the space of $H$--equivariant maps from $Q$ to $M$. Note that this  is nothing but just  the  space of sections of   the associated  bundle $\mathcal{M}:=Q \times_H M$.    An element of $\mathcal{N}$ is    referred to as a \emph{spinor}.

	Note that the  Lie algebra of the $Spin^G$ group  can be decomposed  into  $Lie H= \mathfrak{so}(n) \oplus \mathfrak{g}$. Let $\mathcal{A} $ denote  the  space of connections on $\pi: Q \to Z$ whose $\mathfrak{so}(n)$--component is induced by the  Levi--Civita  connection, i.e.,
	$$\mathcal{A} = \{A  \in \mathcal{A}(Q) \vert pr_{\mathfrak{so}(n) }\circ  A = \pi_{SO(n)}^* A_Z  \},$$
	where $A_Z$  denotes the  Levi--Civita connection of $P_{SO(n)} \to Z$.  Then $\mathcal{A}$ is an affine space over  $\Omega^1(Q, \mathfrak{g})_{hor}^H = \Omega^1(Z, \mathfrak{g}_Q) $, where $ \mathfrak{g}_Q$ is the associated bundle $Q \times_{Ad} \mathfrak{g}$.
	
	Fix a  connection  $A \in \mathcal{A}  $.  For any $u \in \mathcal{N}$, define  a  covariant derivative  $d_A: \mathcal{N} \to C^{\infty}(Q, (\mathbb{R}^n)^*  \otimes u^*TM)^H$ by
	\begin{equation} \label{eq34}
		d_A u :=du +K^{M, H}_A \vert_u,
	\end{equation}
	where $K^{M, H}_A \vert_u$ is defined by
	\begin{equation*}
		K^{M, H}_A \vert_u(v) := K^{M, H}_{A(v)} \vert_u
	\end{equation*}
	for any vector field $v \in \Gamma(TQ)$.

	We  also define another  covariant derivative on $C^{\infty}(Q, TM)^H$. Before that, let us recall the definition of  the  \emph{connector}.  Let $\pi: E \to B $ be a vector bundle. For any $(v, w) \in E\times_{B} E$,  its  \emph{vertical lift} is defined by $$vl_E(v, w) : = \frac{d}{dt}(v+ tw) \vert_{t=0} \in TE^{vert},$$ where $TE^{vert} := \ker \pi_*$ is the vertical bundle.    The vertical lift $vl_E$ gives an isomorphism $E \times_{B} E  \cong TE^{vert}$.  Given any connection $\Phi: TE \to TE$, the corresponding connector $K_{\Phi}$ is defined by
	\begin{equation*}
		K_{\Phi} : = pr_2 \circ vl_E^{-1} \circ \Phi,
	\end{equation*}
	where $pr_2: E\times_B  E \to E$ is the projection onto the second factor.   The covariant derivative  $\nabla^{\Phi}$ of $\Phi$ can be expressed as $\nabla^{\Phi} s = K_{\Phi} \circ ds $ for any section $s \in \Gamma(E)$.  For more details, please see  \cite{KMS}.

	Take  $E=TM$ and $\Phi$ to be the   Levi--Civita  connection of $(M,g_M)$.  The  corresponding connector is denoted by $\mathcal{K}$.  We define another covariant derivative by the following formula:
	\begin{equation}
		\begin{split}
			\nabla_A^{TM} :  &C^{\infty}(Q, TM)^H  \to  C^{\infty}(Q,  (\mathbb{R}^n)^* \otimes TM)^H \\
			& \nabla_A^{TM} V : = \mathcal{K} \circ d_A^{TM} V,
		\end{split}
	\end{equation}
	where $d_{A}^{TM} V:=(dV +K^{TM, H}_A \vert_V)$ is defined in the same way as in (\ref{eq34}).  Alternatively, $d_{A, w}^{TM} V= \frac{d}{dt} V(\sigma(t)) \vert_{t=0}$, where $\sigma$ is a horizontal  path in $Q$ such that $\sigma'(0)$ is a horizontal lift of $w \in TZ$ with respect to $A$. 
	When we restrict   our attention  to a spinor $u$, then   $\nabla_A^{TM} $ descends to   a  covariant derivative (still denoted by  $\nabla_A^{TM}$) on the vector bundle $\pi_! u^* TM:= u^* TM /H $ in the usual sense. (See Remark.4.3.3 of \cite{MC1}.)

	Similar to the usual case, the generalised Dirac operator is  defined to be  the composition of  the Clifford multiplication $c_n$ and the covariant derivative, i.e., $D_A u : = c_n \circ d_A u$. In  terms of the normal  coordinates  $\{x^i\}$, the Dirac operator is
	\begin{equation} \label{eq4}
		\begin{split}
			D_Au =   \sum\limits_i c_n(dx^i)d_{A, i}u .
		\end{split}
	\end{equation}

	\begin{definition} \label{def2}
		A spinor  $u \in \mathcal{N}$   such that $D_A u=0$ is called a harmonic spinor.
	\end{definition}

	Since the Dirac operator $D_A$ is highly non--linear, it is useful to consider its linearization $D_A^{lin, u}:C^{\infty}(Q, u^*TM)^H \to C^{\infty}(Q, u^*TM)^H$. By Lemma 3.6.8 of \cite{MC}, we know that the linearization of $D_A$ at $u$ is
	\begin{equation}   \label{eq28}
		\begin{split}
			D_A^{lin, u}v =    \sum\limits_i c_n(dx^i) \nabla^{TM}_{A, i} v.
		\end{split}
	\end{equation}

	\subsection{Generalised Seiberg--Witten equations}
	Let  $(M, g_M, I_1, I_2, I _3) $ be a hyperK\"{a}hler  manifold with a permuting $Spin^G$--action.  Let  $\mu: M \to \mathfrak{sp}(1)^* \otimes \mathfrak{g}^*$ be the moment map.
	
	Let $(Y, g_Y) $ be a closed Riemannian 3--manifold and $Q \to Y $ be a $Spin^G$ structure.  
	Let  $\{e_l \}_{l=1}^3$ and $\{\zeta_l\}_{l=1}^3$ be respectively  orthonormal bases  of $(\mathbb{R}^3)^*$  and $\mathfrak{sp}(1)^*$. Then we define  an isomorphism    $(\mathbb{R}^3)^* \cong \mathfrak{sp}(1)^*$ by identifying  $e_l$  with $\zeta_l$.   Under this isomorphism, the moment map induces a map $\mu  : \mathcal{N} \to   C^{\infty}(Q,  (\mathbb{R}^3)^* \otimes \mathfrak{g})^H$. Here we abuse the same notation to denote the induced map. As a result, $\mu(u)$ is a $\mathfrak{g}$--valued 1--form on $Y$ for $u \in  \mathcal{N}$.

	The  3--dimensional generalised Seiberg--Witten equations  ask that a  pair $(A, u) \in \mathcal{A} \times \mathcal{N}$ obey
	\begin{equation} \label{eq6}
		\begin{cases}
			D_A u=0,\\
			*F_a  +\mu(u)=0,
		\end{cases}
	\end{equation}
	where $a$ is the $\mathfrak{g}$--component of $A$ and $F_a$ is its curvature.

	For 4--dimensional case, let $(X, g_X)$ be a closed Riemannian 4--manifold with     a $Spin^G$ structure $Q \to X $. Similar to the  3--dimensional case, we fix   orthonormal bases   $\{\eta_l \}_{l=1}^3$ and $\{\zeta_l\}_{l=1}^3$ for   $\Lambda^{2+}(\mathbb{R}^4)^*$  and $\mathfrak{sp}(1)^*$ respectively. Then we get  an isomorphism    $\Lambda^{2+}(\mathbb{R}^4)^* \cong \mathfrak{sp}(1)^*$ via identifying  these bases. Under this isomorphism, we regard   $\mu$ as a map $\mu : \mathcal{N} \to C^{\infty}(Q,  \Lambda^{2+}(\mathbb{R}^{4})^* \otimes \mathfrak{g})^H $. 
	Therefore, $\mu(u)$ is a $\mathfrak{g}$--valued self--dual 2--form on $X$ for $u \in  \mathcal{N}$.
	The 4--dimensional generalised Seiberg--Witten equations are
	\begin{equation}  \label{eq20}
		\begin{cases}
			D_A u=0,\\
			F_a^+  +\mu(u)=0.
		\end{cases}
	\end{equation}
	
	In both 3--dimensional and 4--dimensional cases, a solution $(A, u)$ of the  generalised Seiberg--Witten equations  is referred to as \emph {a (generalised) monopole.} Let $\mathcal{G}: =C^{\infty}(Q, G)^H$ be the gauge group. For $g \in \mathcal{G}$, the gauge action on $\mathcal{A} \times \mathcal{N}$ is given by
	\begin{equation*}
		\begin{split}
			g \cdot (A, u) =(g^*A, g^{-1} u) = (Ad_{g^{-1}}(A) + g^*\eta, g^{-1}u),
		\end{split}
	\end{equation*}
	where $\eta \in \Omega^1(G, \mathfrak{g})^G$ is the left--invariant Maurer--Cartan form on $G$.
	By  Proposition 4.2.8 of \cite{MC}, the generalised Seiberg--Witten equations are  gauge--invariant.

	\subsection{Sobolev norm}
	For a pair $(A ,u ) \in \mathcal{A} \times \mathcal{N}$, we can define its Sobolev norm  as follows. Fix a smooth reference connection $A_0 \in \mathcal{A}$.  
	Then we can identify  $\mathcal{A}$ with $ \Omega^1(Z, \mathfrak{g}_Q)$.   The Sobolev norm on   $\mathcal{A}$ is just defined as usual.  For $\mathcal{N}$, we  use $A_0$ to define the Sobolev norm as follows.   Consider an equivariant  embedding  $ \iota: M \hookrightarrow \mathbb{R}^N$ for some positive integer $N$.    For $u \in \mathcal{N}$,   $\iota \circ u$ becomes an  equivariant map  from $Q$ to $\mathbb{R}^N$.  Then the Sobolev norm of $u$ is defined to be the usual Sobolev norm of $\iota \circ u$.   By the same  trick, we can define the Sobolev norm for the gauge group as well.
	For more details, we refer the  reader to Appendix B of \cite{KW}.
	
	We assume that  $kp> dim Z$ throughout. We can extend the definition of   generalised Seiberg--Witten equations (\ref{eq6}), (\ref{eq20}), and   monopoles  to  the Sobolev  completion of  $\mathcal{A} \times \mathcal{N}$. The monopoles are not necessarily smooth by  the definition. But we can always  find a smooth one after a suitable gauge transformation (see  Corollary 5.3.3 of \cite{HS}). 

	\subsection{Summary of the proof} \label{section3}
	As the proof of Theorem \ref{thm0} and Theorem  \ref{thm1} are basically the same, we focus on the 3--dimensional case from now  on.   We will  indicate the corresponding changes for the case of Theorem \ref{thm1} in Section \ref{section6}.

	Here  we summarise  the idea of the proof.    To this end, let us introduce several functions  as follows:   Let $\delta_0>0$ be  the injectivity radius  of $(Y, g_Y)$. Given a monopole $(A, u)$ and $y\in Y$,   for $r \le \delta_0$,  we define
	\begin{equation} \label{eq33}
		\begin{split}
			F_y(r):=\frac{1}{r}\int_{B_r(y)} |d_Au|^2 + 2|\mu(u)|^2  \ \ and \ \ f_y(r):=\int_{\partial B_r(y)} |\chi_0 \circ u|^2.
		\end{split}
	\end{equation}
	The frequency function is defined by $N_y(r): = \frac{r^2F_y(r)}{f_y(r)}$.  The proof of the theorem is based on the following observations:
	\begin{enumerate}
		\item
		The $Weitzenb\ddot{o}ck$  formula leads to a uniform upper  bound on $\rho_0 \circ u$ (see Lemma \ref{lem1}).  In particular, the function $F_y$ is controlled by $N_y$.
		\item
		Under the assumption $\rho_0\circ u \ge c_{\diamondsuit}^{-1}$, we  show that  $F_y$ is  almost monotone with respect to $r$.  The  Heinz trick and the monotonicity  of $F_y$ yields the following: If $F_y$ is sufficiently small for some $r$, then we  obtain  a  bound on $|d_Au|^2+ |F_a|^2$ over a ball $B_r(y)$.  Moreover,  such a  bound only depends on $ c_{\heartsuit}$, $ c_{\diamondsuit}$, $r$, and some  geometric data.    The bound    ensures that we can extract a convergent subsequence of the monopoles.
		\item
		To ensure that $F_y$ is small enough for some $r$,  we show that  $N_y$ also satisfies    certain   monotonicity property.  This property implies that $N_y$ is  controlled by $\frac{1}{\rho_0\circ u}$.  As a consequence, we can get a uniform bound for $|d_Au|^2+ |F_a|^2$ on the region $\{ y\in Y \vert \rho_0 \circ u(y) \ge c_{\diamondsuit}^{-1}\}$.
	\end{enumerate}
	Many computations  and arguments here are modelled  on \cite{TW}, \cite{HW}, and \cite{WZ}.
	
	\begin{remark}
		The proof here cannot be applied to 4--dimensional case  when $dim G \ne 0$.   Apart the more complicated computation, the reason is that the  frequency  function doesn't satisfy  the monotonicity property. (See \cite{T2}.)
	\end{remark}

	\begin{remark}
		In some cases,  the moment map can be written as $\mu=2\rho_0 \nu$, where $\nu: \mathscr{C}(N) \to  \mathfrak{sp}(1)^*\otimes \mathfrak{g}^*$ is a 3--Sasaki moment map on $\mathscr{C}(N) $. If $\inf_{\mathscr{C}(N)}|\nu| \ge c_0^{-1}$, then one can use the same argument in Theorem 5.4.1 of  \cite{HS} to obtain a uniform upper bound on $\rho_0 \circ u$ without the assumption $\int_Y \rho_0 \circ u \le c_{\heartsuit}$. In these cases, the assumption   $\int_Y \rho_0 \circ u \le c_{\heartsuit}$ in Theorem \ref{thm0} can be removed.   An example that fulfils the condition    $\inf_{\mathscr{C}(N)}|\nu| \ge c_0^{-1}$ is  given  on  page 56 of \cite{HS}.
	\end{remark}

	\section{A priori estimates} \label{section4}
	Let   $(A, u) $ be a  generalised monopole.   According to Corollary 5.3.3 of \cite{HS}, $(A, u)$ is gauge equivalent to a smooth monopole. Therefore, we   assume that  the monopoles under consideration are  smooth throughout.
	
	Recall that the Swann bundle satisfies   $\chi_2 =0$. Under this assumption,      keep in mind that we have
	\begin{equation}\label{eq12}
		d_A u =\nabla_A^{TM} \chi_0 \circ u
	\end{equation}
	for any spinor $u\in \mathcal{N}$. This is an important property that helps us to reduce the non--linear differential to a linear one.     Equation (\ref{eq12}) is  proved in Corollary 4.6.2 of \cite{HS}.  Since (\ref{eq12})  plays a crucial  role in the later analysis,   we prove it again in Lemma \ref{lem14}.  We fix a $G$--invariant metric $g_{\mathfrak{g}}$ over the Lie algebra $\mathfrak{g}$ throughout.
	\begin{definition}
		Recall that $K^{M, G}  \vert_{u}$ is the fundamental vector field along $u$.   The $C^k$--norm  of   $K^{M, G}  \vert_{u}$ is defined by   $$ |K^{M, G} \vert_{u}|_{C^k}:=\sup  \sum\limits_{p=0}^k (|(\nabla^M)^p  K^{M, G} \vert_{u} | ,  $$ where $\nabla^M $ is  the  Levi--Civita  connection of $(M, g_M)$. The definition is similar for $K^{M, Sp(1)} \vert_u$.
	\end{definition}
	
	Recall that the moment map is an  equivariant map  $\mu: M \to \mathfrak{sp}(1)^* \otimes \mathfrak{g}^*$.  We define the  Hessian of $\mu$  by the Levi--Civita connection $\nabla^M$ and  the trivial connections on $\mathfrak{sp}(1)^*$ and $\mathfrak{g}^*$, denoted by $Hess \mu$.

	

	\subsection{Some identities}
	Before we estimate the monopole, let us   deduce some  useful identities (Lemmas \ref{lem14}, \ref{lem11}, \ref{lem9}) which are  used for the later computation.
	\begin{lemma} \label{lem14}
		Let $(A, u) \in \mathcal{A} \times  \mathcal{N}$. Then we have $	d_A u =\nabla_A^{TM} \chi_0 \circ u$.
	\end{lemma}
	\begin{proof}
		By Lemma 2.2.29 of \cite{MC1},  $\chi_0 = grad \rho_0$.  By Proposition 5.6 of \cite{AS}, we have    $g_M= Hess \rho_0$. For any $v, w \in TM$, we have
		\begin{equation*}
			\begin{split}
				g_M(\nabla_v^M\chi_0, w) &= v(g_M(\chi_0, w)) - g_M(\chi_0, \nabla^M_v  w) \\
				& = v w \rho_0 - (\nabla^M_v w) \rho_0 \\
				& = (Hess \rho_0) (v, w ) = g_M(v, w).
			\end{split}
		\end{equation*}
		Therefore, $\nabla^M_v \chi_0 =v $ for any $v \in TM$.
		
		Let $v$ be a vector filed on $Y$ and $\tilde{v}$ be  a  horizontal lift of $v$ with respect to $A$. In particular, $A(\tilde{v})=0$.  By definition and the the above observation, we have
		\begin{equation*}
			\begin{split}
				\nabla_{A, v}^{TM} (\chi_0 \circ u) & = \mathcal{K} \circ d_{A, v}^{TM}(\chi_0 \circ u) =\mathcal{K} \circ d(\chi_0 \circ u)(\tilde{v}) \\
				& =\mathcal{K} \circ d \chi_0 \circ du(\tilde{v})\\
				& = \nabla_{du(\tilde{v})}^M \chi_0 =du(\tilde{v}) = d_{A, v} u.
			\end{split}
		\end{equation*}
	\end{proof}

	\begin{lemma} \label{lem11}
		Let $(A,u)\in \mathcal{A} \times \mathcal{N}$ and  $v\in C^{\infty}(Y, TY) $.  We have the following identities:
		\begin{enumerate}
			\mathitem
			\begin{align*}
				\nabla_{A, v}   \mu(u)= d_u \mu (  d_{A, v} u);
			\end{align*}
			
			\mathitem
			\begin{align*}
				\nabla_{A, v} \nabla_{A, w} \mu(u)=Hess \mu \vert_{u} (d_{A, v}u, d_{A, w} u ) +  d_u \mu ( \nabla^{TM}_{A, v} d_{A, w} u) .
			\end{align*}
		\end{enumerate}
	\end{lemma}
	\begin{proof}
		Under the identification $C^{\infty}(Q,  (\mathbb{R}^3)^* \otimes \mathfrak{g})^H \cong C^{\infty}(Y, TY^* \otimes  \mathfrak{g}_Q)$, we have
		\begin{equation*}
			\nabla_{A,  v} \mu(u) \vert_{\pi(p)} = \frac{d}{dt} \mu( u(\gamma(t))) \vert_{t=0} =d_{u(p)}\mu \circ d_pu(\gamma'(0))  =d_{u(p)}\mu ( d_{A,  {v}} u ),
		\end{equation*}
		where $\gamma$ is a  horizontal path in $Q$ such that $\gamma(0)=p$ and $\gamma'(0)=\tilde{v}(p)$ and  $\tilde{v}$ is a horizontal lift of $v$ with respect to $A$.   Then we get the first assertion of the lemma.

		To prove the second statement,  let  $\sigma$ be a  horizontal path in $Q$ such that $\sigma(0)=p$ and $\sigma'(0)=\tilde{v}(p)$ as before,  where   $\tilde{v}$ is a  horizontal lift of $v$ with respect to $A$. Let $d_{A, w}$ denote a vector field on $M$ such that $d_{A,w } \vert_{u(\sigma(t))} =d_{A,w} u(\sigma(t))$.   Then
		\begin{equation}\label{eq7}
			\begin{split}
				&(Hess\mu \vert_{u(p)})( d_{A, w} u,  d_{A, v} u )\\
				=& \frac{d}{dt}d_{u(\sigma(t))}\mu(d_{A,w}  u(\sigma(t))) \vert_{t=0} - d_{u(p)} \mu (\nabla^M_{d_{A,v} u} d_{A,w} )\\
				=& d_{(u(p), d_{A,w}u \vert_p) }(d\mu)(d_{A,v}^{TM}(d_{A, w} u)   )- d_{u(p)} \mu ( \mathcal{K} \circ \frac{d}{dt} d_{A,w}  {u(\sigma(t))} \vert_{t=0} )\\
				=& d_{(u(p), d_{A,w}u \vert_p) }(d\mu)(d_{A,v}^{TM}(d_{A, w} u)   )- d_{u(p)} \mu (\nabla^{TM}_{A, v} d_{A, w}u).\\
			\end{split}
		\end{equation}
		
		On the other hand, we have
		\begin{equation} \label{eq8}
			\begin{split}
				\nabla_{A,v}\nabla_{A, w}  \mu(u) \vert_{\pi(p)}& = \frac{d}{dt} d_{u(\sigma(t))}  \mu( d_{A, {v}}u({{\sigma(t)}}))  \vert_{t=0}\\
				& =  d_{(u(p), d_{A, w} u \vert_p)} (d\mu) ( {d}^{TM}_{A, v} {d}_{  A, w} u   ).
			\end{split}
		\end{equation}
		Combine Equations (\ref{eq7}) and  (\ref{eq8}); then we get the second conclusion.
	\end{proof}
	
	Fix $u \in \mathcal{N}$. Define an operator $\mathcal{Y}_u: \Omega^1(Y, \mathfrak{g}_Q) \to \Gamma(u^*TM) $ along $u$ by  $$\mathcal{Y}_u(\eta) :=\sum_k I_k K^{M, G}_{<\eta, e_k>} \vert_u,$$ where  $\{e_k\}_{k=1}^3$ is an orthonormal basis of $TY^*$ and $\eta \in \Omega^1(Y, \mathfrak{g}_Q)$.

	\begin{lemma} \label{lem9}
		Let $(A, u) \in \mathcal{A} \times \mathcal{N}$. For any $\xi \in  C^{\infty}(Q, \mathfrak{g} )^H$, we have the following identities:
		\begin{enumerate}
			\item
			If $(A, u)$ is a monopole, then $$ <d_A^* F_a, \xi > = -g_M(K^{M, G}_{\xi} \vert_u, d_A u);$$
			\item
			$< \mathcal{Y}_u( \nabla_A \mu(u)) ,d_A u>= -| d_u\mu  (d_A u)|^2.$
		\end{enumerate}
	\end{lemma}
	\begin{proof}
		Let $\{x^i\}_{i=0}^3$ be the normal coordinates  at $y \in Y$   and $\xi \in  C^{\infty}(Q, \mathfrak{g} )^H$.  We may assume that $d_a \xi =0$ at $y$.
		Write $<\mu(u), \xi> =\sum\limits_{i=1}^3 b_i dx^i$. Then
		\begin{equation*}
			\begin{split}
				<*d_A \mu(u), \xi>= (b_{2;1} -b_{1;2})dx^3+ (b_{3;2} -b_{2;3})dx^1+  (b_{1;3} -b_{3;1})dx^2.
			\end{split}
		\end{equation*}
		By Lemma \ref{lem11} and the Dirac equation (\ref{eq4}), we have
		\begin{equation} \label{eq22}
			\begin{split}
				b_{2;1 }-b_{1;2} &=<d_u \mu(d_{A, 1} u), dx^2 \otimes \xi>_{g_Y \otimes g_{\mathfrak{g}}}-<d_u \mu(d_{A, 2} u), dx^1 \otimes \xi>_{g_Y \otimes g_{\mathfrak{g}}}\\
				&= \omega_2(K^{M, G}_{\xi} \vert_u, d_{A, 1} u ) -\omega_1(K^{M, G}_{\xi} \vert_u, d_{A, 2} u )\\
				&=g_M(K^{M, G}_{\xi} \vert_u, I_2d_{A, 1} u  - I_1 d_{A, 2} u)  =g_M(I_3K^{M, G}_{\xi}\vert_u, -I_1d_{A, 1} u  - I_2 d_{A, 2} u)\\
				&=g_M(K^{M, G}_{\xi} \vert_u, d_{A, 3}u).
			\end{split}
		\end{equation}
		The computation is similar for the other two terms; the details are left to the reader.  Therefore, we have
		$$<d_A^* F_a, \xi > =- \sum_i  g_M(K^{M, G}_{\xi} \vert_u, d_{A, i} u) dx^i=-g_M(K^{M, G}_{\xi} \vert_u, d_{A } u). $$

		Now we prove the second equality.  By definition and Lemma \ref{lem11}, we have
		\begin{equation*}
			\begin{split}
				&< \mathcal{Y}_u( \nabla_A \mu(u)) ,d_A u>=  <I_k K^{M, G}_{<\nabla_{A, j} \mu(u), dx^k>}, d_{A, j} u>\\
				=& -\omega_k (K^{M, G}_{<d_u \mu (d_{A, j} u ), dx^k> }, d_{A, j} u)\\
				=& -g_{\mathfrak{g}} ( < d_u\mu(d_{A, j} u), dx^k>, < d_u\mu(d_{A, j} u), dx^k> )=-|d_u\mu(d_Au)|^2.
			\end{split}
		\end{equation*}
		
	\end{proof}

	\subsection{Estimates}
	Now we begin to estimate the monopoles. From now on, we use $c_0>0$ to denote a   large uniform constant which only depends on the uniform bound $\int_Y \rho_0 \circ u \le c_{\heartsuit}$ and the geometric data. 
	It may be  different from line to line.

	First of all, let us recall the   $Weitzenb\ddot{o}ck$  formula in our setting. 
	\begin{theorem}[ Theorem 6.2.1 of \cite{MC1}]
		The $Weitzenb\ddot{o}ck$  formula in dimension three is as follows:
		\begin{equation} \label{eq26}
			\begin{split}
				D_A^{lin, u^*} D_A u = \nabla_A^{TM, *} d_A  u+ \frac{s_Y}{4} \chi_0 \circ u + \mathcal{Y}_u(*F_a ),
			\end{split}
		\end{equation}
		where $D_A^{lin, u^*}$ and $\nabla_A^{TM, *} $ are  the $L^2$--adjoint of the linearization  (\ref{eq28}) and $\nabla_A^{TM}$ respectively, and $s_Y$ is the scalar curvature of $(Y, g_Y)$. 
	\end{theorem}
	
	\begin{lemma} \label{lem1}
		Let    $(A, u)$ be a  monopole.   Assume that $\int_Y |\chi_0 \circ u|^2 \le c_{\heartsuit} $. Then  we have $\int_Y |d_A u|^2  + 2|\mu (u)|^2 \le c_0$ and $|\chi_0 \circ u|^2 \le c_0$.
	\end{lemma}
	\begin{proof}
		By  Equations (\ref{eq6}) and (\ref{eq26}), we have
		\begin{equation} \label{eq19}
			\begin{split}
				0= \nabla_A^{TM, *} \nabla_A^{TM} \chi_0 \circ u+ \frac{s_Y}{4} \chi_0 \circ u + \mathcal{Y}_u(*F_a ).
			\end{split}
		\end{equation}
		According to Proposition 3.2.6 of \cite{MC},  $<\mu,  \zeta \otimes \xi> =-\frac{1}{2}\omega_{\zeta} (\chi_0 , K^{M, G}_{\xi})$. Hence,
		\begin{equation}
			\begin{split}
				< \mathcal{Y}_u(*F_a), \chi_0 \circ u  >=&   <\chi_0 \circ u, I_k K^{M, G}_{<*F_a , e_k>} \vert_u>\\
				=& \omega_k (\chi_0 \circ u,   K^{M, G}_{<*F_a , e_k>} \vert_u )\\
				=& -2 <\mu(u), \zeta_k \otimes <*F_a , e_k> >  \\
				=& 2 g_ {\mathfrak{g}}(<\mu(u), e_k> , <-*F_a  , e_k> )=  2 |\mu(u)|^2.
			\end{split}
		\end{equation}
		Take inner product  of Equation (\ref{eq19}) with $\chi_0 \circ u $; then  we have
		\begin{equation} \label{eq5}
			\begin{split}
				\frac{1}{2}d^*d |\chi_0 \circ u|^2 + |d_Au |^2 + \frac{s_Y}{4}|\chi_0 \circ u|^2 + 2|\mu(u)|^2=0.
			\end{split}
		\end{equation}
		Integrating    (\ref{eq5})   we   obtain the bound   $\int_Y |d_A u|^2  + 2|\mu (u)|^2 \le c_0$.
		
		Assume that $|\chi_0 \circ u |^2 $ attains  its maximum at $p$. Let $G_p$ be the Green function of $ d^*d$ with pole at $p$. Then we have
		\begin{equation*}
			\begin{split}
				|\chi_0 \circ u|^2(p) & \le   c_0 \int_{B_r(p)}  G_p|\chi_0\circ u|^2   +     c_0 \int_{Y-B_r(p)}  G_p|\chi_0\circ u|^2 +  \frac{1}{vol(Y)}\int_{Y} |\chi_0 \circ u|^2 \\
				&  \le c_0 r^2 |\chi_0 \circ u|^2(p) + (\frac{c_0}{r } +  \frac{1}{vol(Y)}) \int_Y |\chi_0 \circ u|^2.
			\end{split}
		\end{equation*}
		Take  $r>0$ such that $c_0r^2 =\frac{1}{2}$; then we get the  bound on the sup--norm of $|\chi_0 \circ u|$.
	\end{proof}

	\begin{remark}\label{rmk1}
		Let   $Y'$  be the open  submanifold  in Theorem \ref{thm0}  such that  $\inf\limits_{Y'}\rho_0 \circ u_n  \ge {c_{\diamondsuit}}^{-1}$.    According to the  assumption that $N$ is compact,  (\ref{eq15}) and   Lemma \ref{lem1}, the condition   $\inf\limits_{Y'}\rho_0 \circ u_n  \ge c_{\diamondsuit}^{-1}$ implies that the images  of $\{u_n \vert_{Y'}\}_{n=1}^{\infty}$ are contained in a compact subset of $M$.
		
		The consequent of the above observation is that   we can find constants $c_k>0$ satisfying 
		\begin{equation} \label{eq27}
			|Hess\mu \vert_{u_n \vert_{Y'}}|_{C^k}+|Rm_M \vert_{u_n \vert_{Y'}}|_{C^k}+ |K^{M, G}  \vert_{u_n \vert_{Y'}} |_{C^k}  + |K^{M,Sp(1)}  \vert_{u_n \vert_{Y'}} |_{C^k}   \le c_k,
		\end{equation}
		where $Rm_M$ is the Riemannian curvature of $g_M$.  As tensors on $M$, the norms for  $Hess \mu$ and  $Rm_M$   are the usual tenor norms defined by the metric $g_M$ and the  Levi--Civita  connection. If we restrict $Hess \mu$  and $Rm_M$ on a  compact subset $K$ of  $M$, then we can get the uniform bound in (\ref{eq27}). (The bound depends on the compact set $K$.)
		Note that  the fundamental vector field can be regarded as a map $K^{M,G} : M \to TM \otimes \mathfrak{g}^*$. Hence, if we restrict  $K^{M,G}  $ to a compact set   of  $M$, then we can get a $C^k$--bound  on $K^{M,G}  $ for any fixed $k$ as well.  It is similar for  $K^{M,Sp(1)}$.
	\end{remark}
	
	\textbf{ In the rest part of this paper, we assume that $\int_Y |\chi_0 \circ u|^2 \le c_{\heartsuit}$; unless otherwise stated.}
	\begin{lemma} \label{lem12}
		Let    $(A, u)$ be a  monopole. We have the following identity:
		\begin{equation*}
			\begin{split}
				&\frac{1}{2}d^*d |F_a|^2 + |\nabla_{A} (*F_a)|^2 +  |\mathcal{Y}_u(*F_a )|^2 \\
				=& -\frac{s_Y}{4} g_M(\chi_0 \circ u, \mathcal{Y}_u(*F_a )) -  <tr_{g_Y}Hess  \mu \vert_u (d_{A}u, d_{A} u), \mu(u)> .
			\end{split}
		\end{equation*}
		In particular, we have
		\begin{equation*}
			\begin{split}
				\frac{1}{2}d^*d |F_a|^2 + |\nabla_A (*F_a)|^2 +  \frac{1}{2} |\mathcal{Y}_u(*F_a )|^2  \le c_0|\chi_0 \circ u|^2 + |Hess  \mu \vert_u | |d_A u|^2 |F_a|.
			\end{split}
		\end{equation*}
	\end{lemma}
	\begin{proof}
		Let $\{x^i\}_{i=0}^3$ be the  normal coordinates  at $y$. Then by Lemma \ref{lem11} and the $Weitzenb\ddot{o}ck$  formula (\ref{eq19}), we have
		\begin{equation} \label{eq1}
			\begin{split}
				\nabla_{A, i}\nabla_{A, i} \mu(u) \vert_y
				&= Hess \mu \vert_u(d_{A, i} u, d_{A, i} u) + d_u \mu(\nabla_{ A,   i}^{ TM} d_{ A, i} u) \\  
				&= Hess  \mu \vert_{u} (d_{A, i} u, d_{A, i} u) +d_u \mu( \frac{s_Y}{4} \chi_0 \circ u + \mathcal{Y}_u(*F_a)).    \\
			\end{split}
		\end{equation}
		For any vector  field $v \in C^{\infty}(Q, u^* TM)^H$, we have
		\begin{equation} \label{eq17}
			\begin{split}
				&<d_u \mu (v), *F_a >_{g_Y \otimes g_{\mathfrak{g}}} \\
				=&< d_u \mu(v), \zeta_l \otimes <*F_a , dx^l>> \\
				=&\omega_{ l} (K^{M, G}_{<*F_a, dx^l>} \vert_u, v)\\
				=&- g_M(I_l K^{M, G}_{<*F_a , dx^l>} \vert_u, v)  = -g_M( \mathcal{Y}_u(*F_a ), v).
			\end{split}
		\end{equation}
		Take inner product of Equation (\ref{eq1}) with $*F_a$. Then Equation (\ref{eq17}) implies that
		\begin{equation*}
			\begin{split}
				&<\nabla_{A}^*\nabla_{A}(*F_a),  *F_a >\\
				=&<Hess \mu \vert_{u}(d_{A, i}u, d_{A, i} u), *F_a> - |\mathcal{Y}_u(*F_a )|^2 -\frac{s_Y}{4} g_M(\chi_0 \circ u, \mathcal{Y}_u(*F_a)).
			\end{split}
		\end{equation*}

	\end{proof}

	\begin{lemma} \label{lem4}
		Let $(A, u)$ be a monopole. Then we have
		\begin{equation} \label{eq2}
			\begin{split}
				&\frac{1}{2} d^*d |d_A u|^2 + |\nabla_{A}^{TM} d_Au|^2  + |d_u\mu (d_Au)|^2 + |d_A^* F_a|^2 \\
				\le&  c_0|F_a|^2  +( |Rm_{M} \vert_u|  +   |K^{M,G} \vert_u|^2_{C^1} ) |d_Au|^4   + c_0 (1+ |K^{M, Sp(1)}|^4_{C^1}).
			\end{split}
		\end{equation}
	\end{lemma}
	\begin{proof}
		By the curvature formulas in   Lemmas 2.4.1 and 2.4.2 of \cite{HS}, we have
		\begin{equation*}
			\begin{split}
				&\nabla_{A, j}^{TM} \nabla^{TM}_{A, i} d_{A, i}u \\
				=&  \nabla_{A, i}^{TM} \nabla^{TM}_{A, j} d_{A, i}u + \mathcal{K} \circ K^{TM, H}_{F_A(\partial_j, \partial_i)} \vert_{d_{A, i} u} + Rm_M (d_{A, j}u, d_{A, i}u   ) d_{A, i}u\\
				=&  \nabla_{A, i}^{TM} (\nabla^{TM}_{A, i} d_{A, j}u  + K^{M, H}_{F_A(\partial_j, \partial_i) } \vert_u ) + \mathcal{K} \circ K^{TM, H}_{F_A(\partial_j, \partial_i)} \vert_{d_{A, i} u} + Rm_M (d_{A, j}u, d_{A, i}u   ) d_{A, i}u\\
				=& -  \nabla_{A}^{TM, *} \nabla^{TM}_{A} d_{A, j}u +  \nabla_{A, i}^{TM} ( K^{M, H}_{F_A(\partial_j, \partial_i) } \vert_u )\\
				+& \mathcal{K} \circ K^{TM, H}_{F_A(\partial_j, \partial_i)} \vert_{d_{A, i} u} + Rm_M (d_{A, j}u, d_{A, i}u   ) d_{A, i}u\\
			\end{split}
		\end{equation*}
		Lemma 2.4.3  of \cite{HS} implies that   $\mathcal{K} \circ K^{TM, H}_{F_A(\partial_j, \partial_i)} \vert_{d_{A, i} u} =  \nabla^M_{d_{A, i} u}( K^{M, H}_{F_A(\partial_j, \partial_i) } \vert_u ) $.   By definition, we have
		\begin{equation*}
			\begin{split}
				\nabla_{A, i}^{TM} ( K^{M, H}_{F_A(\partial_j, \partial_i) } \vert_u ) &=  \nabla^M _{d_{A, i} u} (K^{M, H}_{F_A(\partial_j, \partial_i)} \vert_u)  +  K^{M, H}_{\nabla_{A, i} F_A(\partial_j, \partial_i)} \vert_u   \\
				& = \nabla^M _{d_{A, i} u} (K^{M, H}_{F_A(\partial_j, \partial_i)} \vert_u)  + K^{M, Sp(1)}_{\nabla_{A_{Y, i}} F_{A_Y}(\partial_j, \partial_i)} +   K^{M, G}_{\nabla_{A, i} F_{a}(\partial_j, \partial_i )} \\
				& = \nabla^M _{d_{A, i} u} (K^{M, H}_{F_A(\partial_j, \partial_i)} \vert_u)  + K^{M, Sp(1)}_{d_{A_Y}^* F_{A_Y}(\partial_j)} +   K^{M, G}_{ d_A^*F_{a}(\partial_j)}  \vert_u.
			\end{split}
		\end{equation*}
		Therefore, 
		\begin{equation*}
			\begin{split}
				&\nabla_{A}^{TM, *} \nabla^{TM}_{A} d_{A, j}u = \nabla^{TM}_{A, j} \nabla_{A}^{TM, *}  d_{A}u  + 2 \nabla^M_{d_{A, i} u} (K^{M, H}_{F_A(\partial_j, \partial_i)} \vert_u)\\
				&+  K^{M, Sp(1)}_ {d_{A_Y}^* F_{A_Y}(\partial_j)} +   K^{M, G}_{ d_A^*F_{a}(\partial_j)}  \vert_u+ Rm_{M} (d_{A, j}u, d_{A, i}u   ) d_{A, i}u. \\
			\end{split}
		\end{equation*}
		By the  $Weitzenb\ddot{o}ck$  formula, we get
		\begin{equation*}
			\begin{split}
				\nabla_{A}^{TM, *} \nabla^{TM}_{A} d_{A, j}u &= -\frac{1}{4} (s_Y)_j  \chi_0 \circ u - \frac{s_Y}{4} d_{A, j} u  -\nabla_{A, j}^{TM} \mathcal{Y}_u(*F_a) \\
				&+ 2 \nabla^M_{d_{A, i} u} (K^{M, H}_{F_A(\partial_j, \partial_i)} \vert_u)+ Rm_M (d_{A, j}u, d_{A, i}u   ) d_{A, i}u \\
				&+  K^{M, Sp(1)}_{d^*_{A_Y} F_{A_Y}(  \partial_j)} \vert_u+   K^{M, G}_{ d_A^*F_{a}(\partial_i)}  \vert_u \\
				& = -\frac{1}{4} (s_Y)_j  \chi_0 \circ u - \frac{s_Y}{4} d_{A, j} u  - ( \nabla^M_{d_{A, i} u } \mathcal{Y}_u)(*F_a) - \mathcal{Y}_u (\nabla_{A, j} *F_a)\\
				&+ 2 \nabla^M_{d_{A, i} u} (K^{M, H}_{F_A(\partial_j, \partial_i)} \vert_u)+ Rm_M (d_{A, j}u, d_{A, i}u   ) d_{A, i}u \\
				&+  K^{M, Sp(1)}_{  d_{A_Y}^*F_{A_Y}(\partial_j)} \vert_u +   K^{M, G}_{ d_A^*F_{a}(\partial_j)}  \vert_u \\
			\end{split}
		\end{equation*}
		Take inner product of the above equation with $d_Au $. By Lemma \ref{lem9},   we get
		\begin{equation*}
			\begin{split}
				&\frac{1}{2} d^*d |d_A u|^2 + |\nabla_{A}^{TM} d_Au|^2 + |d_u\mu (d_Au)|^2 + |d_A^* F_a|^2\\
				\le&   c_0(1+ |\nabla^M K_{F_{A_Y}}^{M, Sp(1)} \vert_u|^2 + |K^{M, Sp(1)}_{ d^*_{A_Y}F_{A_Y}}|^2  )|d_Au|^2 \\
				&+ c_0|F_a|^2  +(|Rm_{M} \vert_u|   + |\nabla^M K^{M,G} \vert_u|^2 ) |d_Au|^4 +  c_0|\chi_0 \circ u|^2 +c_0.
			\end{split}
		\end{equation*}
		
	\end{proof}

	\begin{lemma}  \label{lem3}
		Let    $(A, u)$ be a  monopole.  Recall that  the  function $F_y$ is defined by  $F_y(r) =r^{-1}\left(\int_{B_r(y)} |d_Au|^2 + 2|\mu(u)|^2\right)$ for $r \le \delta_0$, where $\delta_0 $ is the  injectivity radius of $(Y, g_Y)$.  Then we have
		\begin{equation}
			\begin{split}
				\frac{\partial F_y}{\partial r}   \ge \frac{2}{r } \int_{\partial B_r(y)}(|d_{A ,r } u|^2 +  |F_a(\partial_r, \cdot )|^2) - c_0( |K^{M, Sp(1)}_{F_{A_Y}} \vert_u|_{C^0} + 1)^2  F_y(r) - c_0 r^2.
			\end{split}
		\end{equation}
		Moreover, if $   |K^{M, Sp(1)}_{F_{A_Y}} \vert_u|_{C^0 }     \le c_0$, then there exists constants $c_1, c_2 >0$ (independent of $y$) such that  $e^{c_1 r}F_y(r) + c_2 r^3$ is non--decreasing.
	\end{lemma}
	\begin{proof}
		By the  definition of $F$, we  have
		\begin{equation}
			\begin{split}
				\frac{\partial F}{\partial r} =-\frac{1}{r^2} \int_{B_r} ( |d_A u|^2 + 2|\mu(u) |^2)  + \frac{1}{r} \int_{\partial B_r} ( |d_A u|^2 +2 |\mu(u)|^2).
			\end{split}
		\end{equation}
		
		Define symmetric 2--tensors $S= S_{ij} dx^i \otimes dx^j $  and $R=R_{ij} dx^i\otimes dx^j$ respectively  by
		\begin{equation}
			\begin{split}
				&S_{ij}:=   < d_{A, i} u, d_{A, j}u >  -\frac{1}{2}(g_Y)_{ij} |d_A u|^2,\\
				&R_{ij}:= <(F_a)_{ik},    (F_a)_{jk}   > - (g_Y)_{ij}   |F_a|^2.
			\end{split}
		\end{equation}
		Let $\{x^i\}$ be the normal coordinates at $y$. The divergence of $S$ at $y$ is
		\begin{equation} \label{eq9}
			\begin{split}
				S_{ij; j} &=   <\nabla^{TM}_{A, j} d_{A, i} u, d_{A, j} u> +    < d_{A, i} u,  \nabla^{TM}_{A, j}  d_{A, j} u>  - \delta_{ij}  <\nabla^{TM}_{A, j} d_{A, k} u, d_{A, k} u>\\
				&=  <K^{M, H}_{F_A(\partial_j, \partial_i) } \vert_u , d_{A, j} u>   - <\nabla_A^{TM*} d_A u,  d_{A, i}u>  \\
				&= <K^{M, H}_{F_A(\partial_j, \partial_i) } \vert_u , d_{A, j} u>   + < \frac{s_Y}{4} \chi_0 \circ u + \mathcal{Y}_u(*F_a),  d_{A, i}u>.  \\
			\end{split}
		\end{equation}
		Note that
		\begin{equation}\label{eq10}
			\begin{split}
				< \mathcal{Y}_u(*F_a),  d_{A, i}u> &=<I_k K^{M, G}_{(*F_a)_k}, d_{A, i} u> \\
				&=-\omega_k(K^{M, G}_{(*F_a)_k}, d_{A, i} u) =-<d_u\mu(d_{A, i} u ), dx^k \otimes (*F_a)_k>\\
				&=-< \nabla_{A, i} \mu(u), *F_a> = \frac{1}{2}\partial_i|F_a|^2.
			\end{split}
		\end{equation}
		
		To compute $R_{ij;  j}$, note that $R_{ij} = -< (*F_a)_i, (*F_a)_j >$ for $i \ne j$. By a  direct  computation, we get
		\begin{equation*}
			\begin{split}
				R_{1j; j} &=- \partial_1 |F_a|^2  + \partial_1|(F_a)_{1k}|^2 + R_{12;2} +R_{13;3}\\
				&=- \frac{1}{2}\partial_1 |F_a|^2 + <(*F_a)_j, (*F_a)_{j; 1} -  (*F_a)_{1; j} > -<(*F_a)_{j;j}, (*F_a)_1>.
			\end{split}
		\end{equation*}
		Since $ d_A^*(*F_a)=-(*F_a)_{j;j}=0$, the last term vanishes.
		For  any $\xi \in C^{\infty}(Q, \mathfrak{g})^H$, by the same computation as in (\ref{eq22}), we have
		$$<(*F_a)_{2;1} -(*F_a)_{1; 2},  (*F_a)_2> =g_M(  K^{M,G}_{(*F_a)_2},  -  d_{A ,3 } u ) =    g_M(  K^{M,G}_{F_a(\partial_1, \partial_3)},   d_{A ,3 } u ).  $$ The computation is similar for the other  terms. In sum, we have
		\begin{equation}\label{eq11}
			\begin{split}
				R_{ij;j } = -\frac{1}{2}\partial_i |F_a|^2 + g_M(K^{M, G}_{F_a(\partial_i, \partial_j)}, d_{A, j} u).
			\end{split}
		\end{equation}
		Let $T=S+R$. Combine Equations (\ref{eq9}), (\ref{eq10}) and (\ref{eq11}); then  we  have
		\begin{equation}
			\begin{split}
				&T_{ij;j } = <K^{M, Sp(1)}_{F_{A_Y}(\partial_j, \partial_i)} \vert_u, d_{A, j} u >  + <\frac{s_Y}{4} \chi_0 \circ u, d_{A_i} u> \\
				\mbox{and \ }&|div T| \le c_0 (|\chi_0\circ u| + |K^{M, Sp(1)}_{F_{A_Y}} \vert_u|) |d_Au|.
			\end{split}
		\end{equation}
		
		
		Let $r(p) =dist (p, y)$ denote  the distance function. The divergence  theorem implies that
		\begin{equation*}
			\begin{split}
				\int_{B_r}  div T ( \frac{1}{2} \nabla r^2)=r \int_{\partial B_r} T_{ij} r_i r_j - \int_{B_r} <T, \frac{1}{2} \nabla^2 r^2>.
			\end{split}
		\end{equation*}
		Note that $\frac{1}{2}\nabla^2 r^2=g_Y + O(r)$.  Hence
		\begin{equation*}
			\begin{split}
				\int_{B_r}  div T ( \frac{1}{2} \nabla r^2)&=r \int_{\partial B_r}  T( \partial_r,  \partial_r) - \int_{B_r}  tr_{g_Y} T + O(r) \int_{B_r} |T|\\
				&= r\int_{\partial B_r} \left( (|d_{A, r} u|^2  + |F_a(\partial_r, \cdot)|^2)- \frac{1}{2} (|d_A u|^2  +2 |\mu(u)|^2)) \right) \\
				&+ \frac{1}{2}\int_{B_r} (|d_Au|^2  + 2|\mu(u)|^2) + O(r) \int_{B_r} ( |d_Au |^2 +  2|\mu(u)|^2).
			\end{split}
		\end{equation*}
		Therefore, we obtain
		\begin{equation}  
			\begin{split}
				\frac{\partial F}{\partial r}& =  -\frac{2}{r^2}\int_{B_r(x)}  div T  (\frac{1}{2} \nabla r^2) + \frac{2}{r } \int_{\partial B_r}(|d_{A ,r } u|^2 +  |F_a(\partial_r, \cdot)|^2) +O(1) F(r)  \\
				& \ge \frac{2}{r } \int_{\partial B_r}(|d_{A ,r } u|^2 + |F_a(\partial_r, \cdot )|^2) - c_0( |K^{M, Sp(1)}_{F_{A_Y}} \vert_u|_{C^0} + 1)^2  F(r) - c_0 r^2.
			\end{split}
		\end{equation}
		
		Assume that   $|K_{F_{A_Y}}^{M, Sp(1)} \vert_u|_{C^0} \le c_0$.  Then   we find constants $c_1, c_1'>0 $ such that
		\begin{equation} \label{eq29}
			\frac{\partial}{\partial r} F(r) \ge -c_1F(r)-c_1' r^2.
		\end{equation}
		By the differential inequality (\ref{eq29}), we have
		\begin{equation*}
			\begin{split}
				(e^{c_1r}F(r)) ' &\ge -c_1' e^{c_1 r} r^2 \\
				&\ge -c_1' e^{c_1 \delta_0} r^2 =(-\frac{1}{3}e^{c_1 \delta_0} c_1' r^3)'.  \\
			\end{split}
		\end{equation*}
		Therefore,  $(e^{c_1r} F + \frac{1}{3}e^{c_1 \delta_0} c_1' r^3)' \ge 0$. Take $c_2=  \frac{1}{3}e^{c_1 \delta_0} c_1'$; then    $e^{c_1r} F +   c_2r^3$ is non--decreasing.

	\end{proof}

	\begin{prop}[Heinz trick, see Appendix A of \cite{TW}] \label{prop1}
		Let $U \subset Y$ be an open subset and $f: U \to [0, \infty)$ be a non--negative function. Suppose that there exists a constant $c >0$ such that  $f$ satisfies the following properties:
		\begin{enumerate}
			\item
			$d^*d f \le c  (f ^2+1)$.
			\item
			If $B_s (y) \subset B_{\frac{r}{2}}(x) $, then  $ s^{-1} \int_{B_s(y)}  f \le c  r^{-1} \int_{B_r(x)} f + c  r^2$.
		\end{enumerate}
		Then there exist constants $c_0>0$,  $ \epsilon_0>0$ and $\delta_1>0$ depending on the $c $ and the geometric data such that for all $ r\le \delta_1$ and $B_r(x) \subset U$ with
		\begin{equation*}
			\begin{split}
				\epsilon=r^{-1} \int_{B_r(x)} f \le \epsilon_0,
			\end{split}
		\end{equation*}
		we have
		\begin{equation*}
			\begin{split}
				\sup\limits_{B_{\frac{r}{4}}(x)} f \le c_0 r^{-2} \epsilon + c_0 r^2.
			\end{split}
		\end{equation*}
	\end{prop}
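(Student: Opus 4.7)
The plan is to run the classical Heinz point-picking argument, combining the pointwise differential inequality (1) with the scale-invariant Morrey-type estimate (2) to reduce the problem to a quadratic inequality for a suitable auxiliary quantity.

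Fix $B_r(x) \subset U$ with $r \le \delta_1$ and $\epsilon = r^{-2}\int_{B_r(x)} f \le \epsilon_0$. I would introduce the continuous function $g(y) = \bigl(\tfrac{r}{2} - d(y,x)\bigr)^2 f(y)$ on $\overline{B_{r/2}(x)}$; since $g$ vanishes on $\partial B_{r/2}(x)$, it attains its maximum at some interior $y_0$. Setting $s_0 = \tfrac{1}{2}(r/2 - d(y_0,x)) \in (0, r/4]$, the triangle inequality gives $r/2 - d(y,x) \ge s_0$ for every $y \in B_{s_0}(y_0)$, and then the maximality of $g$ at $y_0$ forces $f \le 4\,f(y_0) =: 4M$ throughout $B_{s_0}(y_0)$.

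With this pointwise bound on $f$ in hand, hypothesis (1) gives $d^{*}d f \le c_0(16 M^2 + 1)$ on $B_{s_0}(y_0)$. Comparing $f$ to the explicit quadratic supersolution of $d^{*}d v = c_0(16M^2 + 1)$ vanishing on $\partial B_{s_0}(y_0)$, the standard sub-mean-value inequality in dimension four should yield
$$M \le C\, s_0^{-4}\int_{B_{s_0}(y_0)} f \;+\; C\, s_0^2 (M^2 + 1),$$
with $C$ depending only on geometric data. Hypothesis (2) applied to $B_{s_0}(y_0) \subset B_{r/2}(x)$ bounds the integral by $c_0 s_0^2(\epsilon + r^2)$. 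Multiplying through by $4 s_0^2$ and using $g(y_0) = 4 s_0^2 M$ together with $s_0 \le r/4$ should then produce the scale-invariant quadratic inequality
$$g(y_0) \le C(\epsilon + r^2) + C\, g(y_0)^2.$$

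The hard part will be ruling out the upper root of this quadratic. For $\epsilon_0$ and $\delta_1$ chosen small enough that $4 C^2(\epsilon + r^2) < 1$, the solution set splits into a small interval $[0, \xi_-]$ with $\xi_- \le 2C(\epsilon + r^2)$ and a large branch $[\xi_+, \infty)$ with $\xi_+ \ge 1/(2C)$. Confining $g(y_0)$ to the lower branch is the classical delicate step of the Heinz argument, which I would handle following Appendix A of \cite{TW} via a continuity/rescaling bootstrap in the radius parameter — shrinking the base radius from $\delta_1$ down to $r$ and using that $g^{(\rho)}(y_0^{(\rho)}) \to 0$ as $\rho \to 0$ to prevent a jump across the forbidden gap. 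Granted this, $g(y_0) \le 2C(\epsilon + r^2)$, and since any $y \in B_{r/4}(x)$ satisfies $r/2 - d(y,x) \ge r/4$, I would conclude
$$\sup_{B_{r/4}(x)} f \le 16\, r^{-2}\, g(y_0) \le 32 C\, (r^{-2}\epsilon + 1),$$
which is the claimed estimate after absorbing constants into the generic symbol $c_0$.
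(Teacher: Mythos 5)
The paper offers no proof of this proposition --- it is quoted as a black box with a pointer to Appendix A of \cite{TW} --- so you are not competing with an argument in the text but reconstructing the cited one, and your reconstruction follows exactly the standard Heinz scheme. The point-picking step, the doubling bound $f\le 4M$ on $B_{s_0}(y_0)$, the Green's-function sub-mean-value inequality in dimension four, the use of hypothesis (2) on $B_{s_0}(y_0)\subset B_{r/2}(x)$, and the resulting scale-invariant quadratic inequality $g(y_0)\le C(\epsilon+r^2)+C\,g(y_0)^2$ are all correct. The continuity step you defer to \cite{TW} does close, with one correction to how you describe it: the parameter to shrink is the point-picking radius $\rho\in(0,r/2]$ with the outer ball $B_r(x)$ held fixed (not ``from $\delta_1$ down to $r$'', where the smallness hypothesis is unavailable). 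For every such $\rho$ the picked ball $B_{s_0^{(\rho)}}(y_0^{(\rho)})$ still lies in $B_{r/2}(x)$, so hypothesis (2) applies with the \emph{same} right-hand side $c_0(\epsilon+r^2)$, and the quadratic inequality holds for $G(\rho)=\max_{\overline{B_\rho(x)}}\,(\rho-d(\cdot,x))^2 f$ with constants uniform in $\rho$; since $G$ is continuous and $G(\rho)\to 0$ as $\rho\to 0$, it cannot jump across the gap $(\xi_-,\xi_+)$.

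The one substantive point to flag is quantitative. Your final line gives $\sup_{B_{r/4}(x)}f\le c_0 r^{-2}\epsilon+c_0$, and the additive constant cannot be ``absorbed'' into $c_0 r^2$ for small $r$: the term $c_0r^2$ in hypothesis (2) and the $+1$ in hypothesis (1) propagate into $g(y_0)\le 2C(\epsilon+r^2)$, and dividing by $(r/4)^2$ turns the $r^2$ contribution into an $O(1)$ term, not an $O(r^2)$ one. So what your argument actually proves is the conclusion with $+c_0$ in place of $+c_0r^2$; the sharper additive term stated in the proposition does not follow from the hypotheses as written. This is a defect of the statement rather than of your proof, and it is harmless downstream --- Corollary \ref{lem18} and the proof of Theorem \ref{thm1} only extract a uniform bound on $|d_Au_n|$ over compact sets --- but you should not claim the two bounds coincide.
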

	To simplify the notation, we assume that $\delta_1= \delta_0$   all time.
	
	\textbf{From now on,  we assume that}
	\begin{equation} \label{eq24}
		|Hess \mu \vert_u|_{C^0} + |Rm_M \vert_{u}|_{C^0}+ |K^{M, Sp(1)} \vert_{u}|_{C^1} + |K^{M, G} \vert_{u}|_{C^1}  \le c_0.
	\end{equation}
	As pointed out in Remark \ref{rmk1}, (\ref{eq24}) is true over the submanifold $Y'$.
	\begin{corollary} \label{lem10}
		There exists a  constant $\epsilon_0>0$ depending only on the geometric data and the bound $c_0$ in   (\ref{eq24})  with the following significant: Let  $(A, u) $ be  a  monopole  satisfying (\ref{eq24}). Fix   $y \in Y$. If $F_y( r) \le \epsilon_0 $, then  we have
		\begin{equation*}
			\begin{split}
				|d_A u|^2 +2 |F_a|^2 \le  c_0 r^{-2} \epsilon_0 + c_0 r^2
			\end{split}
		\end{equation*}
		over the ball $B_{\frac{r}{4}} (y)$. 
	\end{corollary}
	\begin{proof}
		Take $f=|d_Au |^2 + 2|F_a|^2$ in Proposition \ref{prop1}. The first condition of  Proposition \ref{prop1} follows from Lemmas \ref{lem1}, \ref{lem12}, and \ref{lem4}.
		To verify  the second condition, note that
		$B_s(y) \subset B_{\frac{r}{2}}(x)$ implies that $B_\frac{r}{2}(y) \subset B_r(x)$.  By Lemma \ref{lem3}, we have
		\begin{equation*}
			\begin{split}
				s^{-1} \int_{B_s(y)} f &\le e^{c_1s} F_y(s) + c_2s^3 \\
				& \le e^{c_1{\frac{\delta_0}{2}}} F_y(\frac{r}{2}) + c_2 \frac{r^3}{8}\\
				& \le 2e^{c_1{\frac{\delta_0}{2}}} \left( r^{-1}\int_{B_r(x)} f \right)
				+ c_2 \frac{r^3}{8}.
			\end{split}
		\end{equation*}
		Thus,  we can apply  Proposition \ref{prop1} and then obtain  the result.
	\end{proof}

	\section{Frequency function } \label{section5}
	In this subsection, we follow the techniques  in \cite{HW}  and \cite{WZ} to show that  a sequence of monopoles $\{(A_n, u_n)\}_{n=1}^{\infty}$ has a convergent subsequence if the assumption (\ref{eq24}) holds.  

	\begin{definition} \label{def1}
		Let $\epsilon_0>0$ be the constant given by Corollary \ref{lem10}. For any $x \in Y$, define a number at $x$ by
		\begin{equation}
			\begin{split}
				r(x):= \sup\{ r \in (0, \delta_0] \vert  \frac{1}{r } \int_{B_r(x)} (|d_A u|^2 +2 |F_a|^2)  \le \epsilon_0\}.
			\end{split}
		\end{equation}
	\end{definition}
	Corollary  \ref{lem10} implies that for any $r \le r(x)$, we have a uniform upper bound on  $|d_A u|^2 +|F_a|^2$ over a ball $B_{\frac{r}{4}}(x)$.

	Let $(A,u)$ be a monopole. Reintroduce the functions $f_x, F_x$ defined in (\ref{eq33}) for  $(A, u)$. The   frequency function is $N_x(r)= \frac{r^2F_x(r)}{f_x(r)}$.  We study their properties  in this section.
	\begin{lemma} \label{lem13}
		The function  $f_x(r) = \int_{\partial B_r(x)} |\chi_0 \circ u|^2$  satisfies the  following equation:
		\begin{equation*}
			\begin{split}
				f_x'(r) =  \frac{2}{r} f_x(r)  +   2r  F_x(r) +  2\int_{B_r(x) } \left(\frac{s_Y}{4} |\chi_0 \circ u|^2  \right) + \mathfrak{r}_x(r),
			\end{split}
		\end{equation*}
		where $|\mathfrak{r}_x(r)|\le c_0r  f_x(r).$
	\end{lemma}
	\begin{proof}
		By a straightforward  computation, we have
		\begin{equation*}
			\begin{split}
				f'(r) = \frac{2}{r} f(r)  + \int_{ \partial B_r} \partial_r | \chi_0 \circ  u|^2 + \mathfrak{r}(r).
			\end{split}
		\end{equation*}
		The term  $\mathfrak{r}(r) $ comes from the non--flatness of the metric. It  satisfies $|\mathfrak{r}(r)| \le c_0 r f(r)$.   By the divergence theorem, we obtain
		\begin{equation*}
			\begin{split}
				\int_{ \partial B_r} \partial_r | \chi_0 \circ u|^2 =   \int_{  B_r}  div \nabla (| \chi_0 \circ  u|^2) =  -\int_{  B_r}  d^*d| \chi_0 \circ  u|^2.  
			\end{split}
		\end{equation*}
		According to  Equation (\ref{eq5}), we get  
		\begin{equation*}
			\begin{split}
				\int_{ \partial B_r} \partial_r | \chi_0 \circ u |^2 =2r F +  2\int_{B_r } \frac{s_Y}{4} |\chi_0 \circ u|^2.
			\end{split}
		\end{equation*}

	\end{proof}

	\begin{corollary} \label{lem8}
		For any $0< s<r \le \delta_0$, we have
		\begin{equation} \label{e1}
			\begin{split}
				e^{c_0s^2 } \frac{f(s)}{s^2} \le e^{c_0 r^2} \frac{f(r)}{r^2}.
			\end{split}
		\end{equation}
		Also, we have  $\int_{B_r} |\chi_0 \circ u|^2 \le c_0 r f(r)$.
	\end{corollary}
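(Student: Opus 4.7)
The plan is to derive both conclusions from Lemma~\ref{lem13} via a Gronwall-type ODE argument for $g(r) := f(r)/r^3$, followed by a coarea reduction for the integral estimate. Starting from the identity of Lemma~\ref{lem13}, I would rewrite it as
\begin{equation*}
f'(r) - \frac{3}{r} f(r) = 2r^2 F(r) + 2 \int_{B_r} \tfrac{s_X}{4}\, |\chi_0 \circ u|^2 + O(r) f(r),
\end{equation*}
discard the nonnegative term $2 r^2 F(r)$, and estimate the scalar-curvature integral by $c_0 r^4$ using $|s_X| \le c_0$ together with the sup-norm bound $|\chi_0 \circ u| \le c_0$ from Lemma~\ref{lem6}. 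Dividing through by $r^3$ produces the differential inequality
\begin{equation*}
g'(r) + c_0\, r\, g(r) \ge -c_0\, r.
\end{equation*}

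Multiplying by the integrating factor $e^{c_0 r^2/2}$ converts this into
$\frac{d}{dr}\left(e^{c_0 r^2/2}\, g(r) \right) \ge -c_0\, r\, e^{c_0 r^2/2} = -\frac{d}{dr}\, e^{c_0 r^2/2}$, so integrating from $s$ to $r$ yields
$e^{c_0 r^2/2}(g(r) + 1) \ge e^{c_0 s^2/2}(g(s) + 1)$, equivalent to (\ref{e1}) after relabelling the exponential constant (allowed by the paper's convention) and absorbing the harmless additive shift. The principal technical nuisance is precisely this additive $-c_0 r$ term on the right: a bare exponential comparison does not kill it, and the ``$+1$'' trick above is the standard device for folding the shift into the exponential factor.

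With monotonicity in hand, the integral estimate follows by a routine application of Fubini in geodesic polar coordinates centred at $x$: $\int_{B_r(x)} |\chi_0 \circ u|^2 \le c_0 \int_0^r f(t)\, dt$, with the constant $c_0$ absorbing the Jacobian correction of the metric. The monotonicity (\ref{e1}) gives $f(t) \le c_0\, (t/r)^3\, f(r)$ for $0 \le t \le r \le \delta_0$, whence
\begin{equation*}
\int_0^r f(t)\, dt \;\le\; c_0\, \frac{f(r)}{r^3} \int_0^r t^3\, dt \;=\; c_0\, r\, f(r),
\end{equation*}
completing the proof.
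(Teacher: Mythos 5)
Your reduction to a differential inequality for $g(r) = f(r)/r^3$ and the integrating-factor computation are carried out correctly, and your second half (the coarea argument giving $\int_{B_r}|\chi_0\circ u|^2\le c_0 rf(r)$ from the monotonicity) is exactly the paper's argument. The gap is in how you treat the bulk term $2\int_{B_r}\tfrac{s_X}{4}|\chi_0\circ u|^2$. Bounding it by $c_0r^4$ via the sup-norm of $\chi_0\circ u$ leaves an \emph{additive} error $-c_0r$ in the inequality for $g'$, and the ``$+1$'' device you then invoke proves monotonicity of $e^{c_0r^2/2}\left(g(r)+1\right)$, which is \emph{not} equivalent to (\ref{e1}): it only yields $e^{c_0s^2}g(s)\le e^{c_0r^2}g(r)+c_0r^2$, and the additive $c_0r^2$ cannot be absorbed into the multiplicative statement when $g$ is small compared to $1$. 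This is not a cosmetic point: (\ref{e1}) is used in Lemma \ref{lem7} precisely through the monotonicity of $\kappa^2=e^{-2\sigma}f/r^3$, whose limit as $r\to0$ is a constant times $\rho_0\circ u(x)$ and may be arbitrarily small, and it is used to establish $|\sigma(r)|\le c_0r^2$ via $\frac{1}{f(s)}\int_{B_s}|\chi_0\circ u|^2\le c_0s$; your weakened conclusion $\int_0^rf\le c_0rf(r)+c_0r^6$ would leave an uncontrolled $s^6/f(s)$ in that integral.

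The missing ingredient is the Poincar\'e-type trace inequality $\int_{B_r}h^2\le c_0\bigl(r^2\int_{B_r}|dh|^2+r\int_{\partial B_r}h^2\bigr)$ applied to $h=|\chi_0\circ u|$: combined with the Kato inequality and the identity $d_Au=d_A^{TM}\chi_0\circ u$, it gives $\int_{B_r}|\chi_0\circ u|^2\le c_0\bigl(r^4F(r)+rf(r)\bigr)$, so the scalar-curvature term is controlled by quantities proportional to $F$ and $f$ rather than by an absolute constant. The $r^4F$ contribution is then swallowed by the positive $2r^2F$ term already present in Lemma \ref{lem13} (for $r\le\delta_0$ small), and the $rf$ contribution joins the $O(r)f$ error, yielding the purely multiplicative inequality $f'\ge\frac{3}{r}f-c_0rf$, which integrates to (\ref{e1}) exactly, with no additive shift to dispose of.
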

	\begin{proof}
		By the fact that $\int_{B_r} h^2 \le c_0(r^2 \int_{B_r} |dh|^2 + r \int_{\partial B_r}  h^2 )$, Kato's inequality, and    Lemma \ref{lem13}, we have
		\begin{equation} \label{eq13}
			\begin{split}
				f' \ge \frac{2}{r}f - c_0 rf .
			\end{split}
		\end{equation}
		Then we get   (\ref{e1}) by integrating  Inequality (\ref{eq13}).  Using this monotonicity  property, we have
		\begin{equation*}
			\begin{split}
				\int_{B_r} |\chi_0 \circ u|^2 =\int_0^r f(s)ds \le c_0 \frac{f(r)}{r^2} \int_0^r s^2 ds \le c_0r f(r).
			\end{split}
		\end{equation*}
	\end{proof}
	Define a non--negative function $\kappa_x$  by the relation $\kappa_x^2= e^{-2\sigma_x} r^{-2} f_x$, where $$\sigma_x (r): =  \int_{0}^r  \frac{1}{f_x(s)} (\int_{B_s(x)} \left( \frac{s_Y}{4} |\chi_0 \circ u|^2   \right)+ \frac{1}{2}\mathfrak{r}_x(s) ) ds.$$
	Note that Corollary \ref{lem8} implies that $|\sigma(r)| \le c_0 r^2$ and $|\sigma'(r)| \le c_0 r$. 
	By Lemma \ref{lem13}, it is straightforward to check that  $\kappa$ satisfies  the  differential equality
	\begin{equation} \label{eq14}
		\begin{split}
			\frac{d \kappa}{dr} =\frac{1}{r} N \kappa.
		\end{split}
	\end{equation}
	
	\begin{lemma} \label{lem5}
		We have the following inequality:
		\begin{equation*}
			\begin{split}
				N'(r)
				&\ge \frac{2e^{-2\sigma}}{r^2 \kappa^2} \int_{\partial B_r}\left( |d_{A ,r } u   -\frac{N}{ r} \chi_0 \circ u|^2  +  |F_a(\partial_r, \cdot)|^2 \right)     \\
				&-c_0(   |K^{M, Sp(1)}_{F_{A_Y}} \vert_u|_{C^0}^2  +  1) N - c_0 \frac{r^2 e^{-2\sigma}}{\kappa^2}.
			\end{split}
		\end{equation*}
	\end{lemma}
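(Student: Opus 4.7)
The plan is a classical frequency–function computation: take the logarithmic derivative of $N=r^3F/f$, substitute the known formulas for $F'$ (Lemma~\ref{lem3}) and $f'$ (Lemma~\ref{lem13}), and then complete a square involving $d_{A,r}u$ and $(N/r)\chi_0\circ u$.  Concretely, from $\ln N=3\ln r+\ln F-\ln f$ I would first write
$$\frac{N'}{N}=\frac{3}{r}+\frac{F'}{F}-\frac{f'}{f}.$$
Substituting Lemma~\ref{lem3} into $F'/F$ and Lemma~\ref{lem13} into $f'/f$, together with the identities $r^2F/f=N/r$ and $2\sigma'(r)f(r)=2\int_{B_r}\tfrac{s_X}{4}|\chi_0\circ u|^2+O(r)f(r)$, gives
$$N'\ \ge\ \frac{2r}{f}\int_{\partial B_r}|d_{A,r}u|^2-\frac{2N^2}{r}-2\sigma' N-c_0(|K^{M,H}_{F_A}|_u|+1)^2N-\frac{c_0r^5}{f},$$
after multiplying through by $N$ and using $\tfrac{N}{r^2F}=\tfrac{r}{f}$, $\tfrac{N}{F}=\tfrac{r^3}{f}$.

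The crucial step is to identify the cross–term $\int_{\partial B_r}g_M(d_{A,r}u,\chi_0\circ u)$.  Because $\chi_2=0$, Corollary~4.6.2 of \cite{HS} gives $d_Au=d_A^{TM}\chi_0\circ u$, which in turn implies $(\nabla^M\chi_0)(d_{A,r}u)=d_{A,r}u$ along $u$, and hence
$$\partial_r|\chi_0\circ u|^2\ =\ 2g_M\!\bigl(\chi_0\circ u,\;(\nabla^M\chi_0)(d_{A,r}u)\bigr)\ =\ 2g_M(\chi_0\circ u,d_{A,r}u).$$
Integrating over $\partial B_r$ and comparing with the intrinsic expansion $f'(r)=\tfrac{3}{r}f+\int_{\partial B_r}\partial_r|\chi_0\circ u|^2+O(r)f$ against Lemma~\ref{lem13} yields
$$\int_{\partial B_r}g_M(d_{A,r}u,\chi_0\circ u)\ =\ r^2F+\sigma'f+O(r)f.$$

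With this in hand, expanding
$$\Bigl|d_{A,r}u-\tfrac{N}{r}\chi_0\circ u\Bigr|^2=|d_{A,r}u|^2-\tfrac{2N}{r}g_M(d_{A,r}u,\chi_0\circ u)+\tfrac{N^2}{r^2}|\chi_0\circ u|^2$$
and multiplying by $2r/f$, together with $\tfrac{1}{rf}\int_{\partial B_r}|\chi_0\circ u|^2=\tfrac{1}{r}$, produces the identity
$$\frac{2r}{f}\int_{\partial B_r}\Bigl|d_{A,r}u-\tfrac{N}{r}\chi_0\circ u\Bigr|^2=\frac{2r}{f}\int_{\partial B_r}|d_{A,r}u|^2-\frac{2N^2}{r}-4N\sigma'+O(r)N.$$
Substituting into the earlier inequality, the $-2N^2/r$ term is replaced by the nonnegative square, the $-2\sigma' N$ and $4N\sigma'$ combine with $O(r)N$ into a term of magnitude $\le c_0N\le c_0(|K^{M,H}_{F_A}|+1)^2N$ (using $|\sigma'|\le c_0r$ from the remark following Corollary~\ref{lem8}), and can be absorbed.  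Finally, the identities $\tfrac{r}{f}=\tfrac{e^{-2\sigma}}{r^2\kappa^2}$ and $\tfrac{r^5}{f}=\tfrac{r^2e^{-2\sigma}}{\kappa^2}$, coming from the definition $\kappa^2=e^{-2\sigma}r^{-3}f$, convert the inequality into the stated form.

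The main obstacle is the bookkeeping in the completion of the square: one must track three different sources of $O(r)f$ and $\sigma'f$ remainders (in Lemma~\ref{lem3}, in Lemma~\ref{lem13}, and in the geometric expansion of the sphere volume) and verify that they all combine into terms controllable by $c_0(|K^{M,H}_{F_A}|+1)^2N$.  The conceptual input that makes all of this succeed is the hyperK\"ahler–potential identity $d_Au=d_A^{TM}\chi_0\circ u$, which is exactly what turns the cross–term into $r^2F$ plus the terms incorporated into $\sigma$; without $\chi_2=0$ the square would not close and no monotonicity statement of this shape would be available.
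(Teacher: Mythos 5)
Your argument is correct and is essentially the paper's own proof: differentiate $N$, feed in Lemma \ref{lem3} for $F'$ and Lemma \ref{lem13} for $f'$, identify the boundary cross-term $\int_{\partial B_r}\langle d_{A,r}u,\chi_0\circ u\rangle = r^2F+\sigma'f+O(r)f$ via $d_Au=d_A^{TM}\chi_0\circ u$ and the Weitzenb\"ock formula, complete the square, and absorb the $\sigma'N$ and $O(r)N$ remainders. The only cosmetic difference is that you take the logarithmic derivative of $N=r^3F/f$ directly, whereas the paper routes the same algebra through $\kappa^2=e^{-2\sigma}r^{-3}f$ and the identity $\kappa'=N\kappa/r$.
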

	\begin{proof}
		By Equation (\ref{eq14}) and Lemma \ref{lem3}, we have
		\begin{equation} \label{eq16}
			\begin{split}
				N'(r)  &=\frac{F'}{\kappa^2}e^{-2\sigma} -2N \sigma' - 2 \frac{N^2}{r}\\
				&\ge \frac{2e^{-2\sigma}}{r  \kappa^2} \int_{\partial B_r}(|d_{A ,r } u|^2  +  |F_a(\partial_r, \cdot)|^2 )  -c_0(  |K^{M, Sp(1)}_{F_{A_Y}} \vert_u|_{C^0}  +   1)^2  N \\
				&- c_0 \frac{r^2 e^{-2\sigma}}{\kappa^2} -2Nc_0 r-2\frac{N^2}{r}.
			\end{split}
		\end{equation}
		Using  integration by part and the   $Weitzenb\ddot{o}ck$ formula, we get
		\begin{equation*}
			\begin{split}
				N&= \frac{e^{-2\sigma}}{r \kappa^2}\int_{B_r} ( |d_A u|^2 +2 |\mu(u)|^2) \\
				&= \frac{e^{-2\sigma}}{r \kappa^2}\int_{B_r}  2 |\mu(u)|^2  +  \frac{e^{-2\sigma}}{r \kappa^2}\int_{\partial B_r} <\chi_0\circ u, d_{A, r} u> \\
				&-    \frac{e^{-2\sigma}}{r \kappa^2} \int_{B_r}( \frac{s_Y}{4}|\chi_0 \circ u |^2  + < \chi_0 \circ u, \mathcal{Y}_u (*F_a)>)  \\
				&=\frac{e^{-2\sigma}}{r\kappa^2}\int_{\partial B_r} <\chi_0 \circ u, d_{A, r} u> + O(r^2).
			\end{split}
		\end{equation*}
		
		The term $-2\frac{N^2}{r}$ in Inequality (\ref{eq16}) is equal  to $-4\frac{N^2}{r}+ 2\frac{N^2}{r}$.  Replace    $-4\frac{N^2}{r}$ by  $ -4 \frac{e^{-2\sigma}}{r\kappa^2}\int_{\partial B_r} <\frac{N}{r}\chi_0 \circ u, d_{A, r} u> + O(r) N.$ Then we get
		\begin{equation*}
			\begin{split}
				N'(r)
				&\ge \frac{2e^{-2\sigma}}{r^2 \kappa^2} \int_{\partial B_r} \left(|d_{A ,r } u  -\frac{N}{ r} \chi_0 \circ u|^2   + 2|F_a(\partial_r, \cdot)|^2 \right) \\
				&- c_0(  |K^{M, Sp(1)}_{F_{A_Y}} \vert_u|_{C^0}  +   1)^2  N - c_0 \frac{r^2 e^{-2\sigma}}{\kappa^2}  -c_0 r N.
			\end{split}
		\end{equation*}
		
	\end{proof}


	\begin{lemma} \label{lem7}
		Let $x \in Y $ such that $\rho_0\circ u(x) \ne 0$.    For any $\epsilon_1 >0$,    there exists  a constant  $c_{\epsilon_1}>0 $ such that   we have  $N_x(r) \le \epsilon_1$ for any $r \le \min\{\sqrt{\epsilon_1}, c_{\epsilon_1} \rho_0 \circ u(x)\}$.
	\end{lemma}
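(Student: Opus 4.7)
The plan is to combine the lower bound $\kappa^2(r) \ge c_0 \rho_0 \circ u(x)$ with the almost-monotonicity of the frequency function from Lemma \ref{lem5}. Write $\delta := \rho_0 \circ u(x) > 0$. A key auxiliary identity is $\frac{d}{dr}\log \kappa^2(r) = \frac{2N(r)}{r}$, which follows from the direct computation $(\kappa^2)' = 2e^{-2\sigma}F/r$ (the function $\sigma$ was defined precisely so that the $f'/f - 3/r$ combination from Lemma \ref{lem13} reduces to $2r^2 F/f$) together with $N = F/(e^{2\sigma}\kappa^2)$.

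First I would establish $\kappa^2(r) \ge c_0 \delta$ for all $r \in (0,\delta_0]$. Corollary \ref{lem8} shows that $e^{c_0 s^2} f(s)/s^3$ is non-decreasing in $s$, while the smoothness of $u$ together with $\chi_0 \circ u(x) \ne 0$ gives $\lim_{s \to 0^+} f(s)/s^3 = c'|\chi_0 \circ u(x)|^2 = 2c'\delta$; combined with the easy estimate $|\sigma(r)| \le c_0 r^2$, this yields the claim. Substituting $\kappa^2 \ge c_0 \delta$ into Lemma \ref{lem5}, using $|K^{M,H}_{F_A}\vert_u| \le c_0$, and dropping the nonnegative first term produces
\begin{equation*}
N'(r) \ge -c_0 N(r) - \frac{c_0 r^2}{\delta},
\end{equation*}
so that $\Psi(r) := e^{c_0 r} N(r) + C r^3/\delta$ is monotone non-decreasing for a suitable constant $C$.

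Next I would integrate the exact identity $\int_r^{r_0} \frac{2N(t)}{t}\,dt = \log(\kappa^2(r_0)/\kappa^2(r))$ over a reference scale $r_0 \in (r,\delta_0]$. The sup-norm bound on $|\chi_0 \circ u|$ from Lemma \ref{lem6} gives $\kappa^2(r_0) \le c_0$, while the previous step gives $\kappa^2(r) \ge c_0\delta$, bounding this integral. Using the non-decreasingness of $\Psi$ in reverse to obtain $N(t) \ge C^{-1} N(r_0) - C r_0^3/\delta$ on $[r_0/2, r_0]$ converts this bound into an upper estimate for $N(r_0)$; the monotonicity $\Psi(r) \le \Psi(r_0)$ then yields $N(r) \lesssim N(r_0) + r_0^3/\delta$. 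Optimising $r_0$ so that the error $r_0^3/\delta$ is controlled and using both hypotheses $r \le \sqrt{\epsilon_1}$ and $r \le c_{\epsilon_1}\delta$ should give $N(r) \le \epsilon_1$ for a suitable $c_{\epsilon_1}$ depending on $\epsilon_1$ and the ambient geometric data.

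The principal obstacle is that the naive estimate $\log(\kappa^2(r_0)/\kappa^2(r)) \le |\log\delta| + C$ blows up as $\delta \to 0$, so the integral control on $N(r_0)$ is not automatically small. Achieving the precise linear dependence $r \le c_{\epsilon_1}\delta$ in the statement requires sharpening the monotonicity, either by retaining the nonnegative term $\frac{2e^{-2\sigma}}{r^2\kappa^2}\int_{\partial B_r}|d_{A,r}u - (N/r)\chi_0 \circ u|^2$ from Lemma \ref{lem5} (which forces $d_{A,r}u$ to be nearly radial whenever $N$ is far from zero, enabling a Cauchy--Schwarz improvement on the growth of $\kappa^2$) or by invoking the Heinz-trick pointwise bound from Corollary \ref{lem18} at a scale comparable to $\delta$ to sharpen the upper bound on $\kappa^2(r_0)$ and hence on the logarithmic ratio.
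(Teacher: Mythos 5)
Your proposal assembles the right ingredients --- the identity $\int_r^{r_0}\frac{2N(t)}{t}\,dt=\log\bigl(\kappa^2(r_0)/\kappa^2(r)\bigr)$ from (\ref{eq14}), the bounds $\kappa^2(r_0)\le c_0$ and $\kappa^2(r)\gtrsim \rho_0\circ u(x)$, and the almost-monotonicity of $N$ coming from Lemma \ref{lem5} --- but the final assembly does not close, and you essentially say so yourself. Two concrete problems. First, the inequality $N(t)\ge C^{-1}N(r_0)-Cr_0^3/\delta$ on $[r_0/2,r_0]$ runs the monotonicity the wrong way: non-decreasingness of $\Psi$ bounds $N$ at the \emph{larger} radius from below by $N$ at the smaller one, so for $t\le r_0$ it yields an upper bound on $N(t)$ in terms of $N(r_0)$, not a lower bound; what you actually have is $N(t)\ge C^{-1}N(r_0/2)-Cr_0^3/\delta$. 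Second, and more importantly, even after that correction your scheme only produces $N(r_0/2)\lesssim\log(c_0\delta^{-1})+r_0^3/\delta$ and then $N(r)\lesssim N(r_0/2)+r_0^3/\delta$; the $\log(1/\delta)$ survives and no choice of $r_0$ removes it. The repairs you sketch (a Cauchy--Schwarz improvement using the nonnegative boundary term of Lemma \ref{lem5}, or a Heinz-trick input at scale $\delta$) are neither carried out nor what the paper does.

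The paper's resolution is simpler and is the one idea your write-up is missing: apply the almost-monotonicity in the correct direction, from the target radius $r$ upward, to get $N(t)\ge c_0^{-1}N(r)-c_0\rho^{-1}r_0^2$ for \emph{all} $t\in[r,r_0]$, and integrate over the \emph{whole} interval $[r,r_0]$. The identity then gives
\begin{equation*}
2\log(r_0/r)\,\bigl(c_0^{-1}N(r)-c_0\rho^{-1}r_0^2\bigr)\;\le\;\log(c_0\rho^{-1}),
\qquad\text{hence}\qquad
N(r)\;\le\;c_0\,\frac{\log(c_0\rho^{-1})}{\log(r_0/r)}+c_0 r_0^2\rho^{-1},
\end{equation*}
which is the paper's Equation (\ref{eq3}). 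The divisor $\log(r_0/r)$ --- large precisely because the hypothesis $r\le c_{\epsilon_1}\rho_0\circ u(x)$ forces $r$ far below the reference scale $r_0\sim\sqrt{\epsilon_1\rho}$ --- is what absorbs the $\log(1/\delta)$ that your version cannot. By contrast, bounding $N$ at the top scale $r_0$ first and transporting it down discards exactly this leverage. Your opening step (the lower bound on $\kappa^2(r)$ via Corollary \ref{lem8} and the limit of $f(s)/s^3$ as $s\to 0$) and your derivation of the differential inequality $N'\ge -c_0N-c_0r^2/\delta$ do match the paper and are fine.
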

	\begin{proof}
		Let $0<r \le  r_0  $ and $\rho=  \rho_0 \circ u(x)$. By  Corollary \ref{lem8}, we know that $ \kappa^2(r) \ge c_0\rho$. Also, $\kappa^2 \le c_0$ because of Lemma \ref{lem1}. By (\ref{eq14}),  we have
		\begin{equation} \label{eq30}
			\int_r^{r_0} 2\frac{N(t)}{t} dt  = \log \frac{\kappa^2(r_0)}{ \kappa^2(r)} \le \log (c_0 \rho^{-1}).
		\end{equation}
		By Lemma \ref{lem5} and $\kappa^2 \ge c_0\rho$, we have
		\begin{equation} \label{eq31}
			N(t) \ge c_0^{-1} N(r) - c_0\rho^{-1}r_0^2,
		\end{equation}
		for $r \le t\le r_0$.
		Combine (\ref{eq30}) and (\ref{eq31}); then we get
		\begin{equation} \label{eq3}
			\begin{split}
				N(r) \le c_0 \frac{\log (c_0 \rho^{-1}  )}{ \log (\frac{r_0}{r})} +  c_0r_0^2 \rho^{-1}.
			\end{split}
		\end{equation}
		If $c_0\rho^{-1} \le 1$, then we  can take $r_0 = \sqrt{\epsilon_1}$.   If $c_0 \rho^{-1}  >1$, we   take $r_0$ to be square root  of    $\frac{\epsilon_1}{2c_0} \rho $.  Then the right hand side of (\ref{eq3}) is  less than $\epsilon_1$ if  $c_0 \frac{\log c_0 \rho^{-1}}{ \log \frac{r_0^2}{r^2}} \le \frac{1}{4} \epsilon_1$.  Note this is equivalent to  require that  $r\le  c_{\epsilon_1} \rho$.
		
		In sum, if $r \le \min\{\sqrt{\epsilon_1}, c_{\epsilon_1}\rho\}$, then  $N(r) \le \epsilon_1$.
	\end{proof}

	\begin{corollary} \label{lem6}
		Let $x \in Y $ such that $\rho_0\circ u(x) \ne 0$. Then the  quantity  $r(x)$ defined in Definition \ref{def1}  satisfies $r(x) \ge \min\{\sqrt{\epsilon_1}, c_{\epsilon_1} \rho_0 \circ u(x)\} $.
	\end{corollary}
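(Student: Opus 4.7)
The plan is to reduce the lower bound on the critical radius $r(x)$ to the upper bound on the frequency function $N_x$ already established in Lemma \ref{lem7}. By the very definition of $r(x)$, it suffices to exhibit some $r_{\ast}\in(0,\delta_0]$ with $F_x(r)\le\epsilon_0$ for all $r\le r_{\ast}$, and then conclude $r(x)\ge r_{\ast}$.

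The key identity is the tautological relation
\begin{equation*}
F_x(r)=\frac{N_x(r)\,f_x(r)}{r^{3}},
\end{equation*}
which expresses $F_x$ as the product of the frequency function and the rescaled boundary mass $f_x(r)/r^{3}$. The factor $f_x(r)/r^{3}$ is harmless: Lemma \ref{lem6} gives a uniform sup-norm bound $|\chi_0\circ u|\le c_0$, and hence
\begin{equation*}
\frac{f_x(r)}{r^{3}}=\frac{1}{r^{3}}\int_{\partial B_r(x)}|\chi_0\circ u|^{2}\le c_0
\end{equation*}
for every $r\le\delta_0$. Substituting back yields $F_x(r)\le c_0\,N_x(r)$.

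It remains to force $N_x(r)$ to be smaller than $\epsilon_0/c_0$. This is exactly the content of Lemma \ref{lem7}: given any target $\epsilon_1>0$ one has $N_x(r)\le\epsilon_1$ provided $r\le\min\{\sqrt{\epsilon_1},c_{\epsilon_1}\rho_0\circ u(x)\}$. Choose $\epsilon_1$ so that $c_0\epsilon_1\le\epsilon_0$ (and, if necessary, shrink it further so $\sqrt{\epsilon_1}\le\delta_0$). With this calibration of $\epsilon_1$ in terms of $\epsilon_0$, the combination of the two bounds gives $F_x(r)\le\epsilon_0$ on the range $r\le\min\{\sqrt{\epsilon_1},c_{\epsilon_1}\rho_0\circ u(x)\}$, from which the definition of $r(x)$ yields the claimed lower bound.

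No real obstacle is anticipated here; the nontrivial analytic content — the $L^{\infty}$ bound for $\chi_0\circ u$ coming from the Weitzenb\"ock formula and the logarithmic control of $N_x$ by $\rho_0\circ u$ coming from the monotonicity of $\kappa$ — has already been isolated in Lemmas \ref{lem6} and \ref{lem7}. The only bookkeeping is to pick $\epsilon_1$ in terms of $\epsilon_0$ so that the two estimates fit together.
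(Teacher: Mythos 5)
Your proposal is correct and follows essentially the same route as the paper: write $F_x(r)=N_x(r)f_x(r)/r^{3}$, use the sup-norm bound on $|\chi_0\circ u|$ from Lemma \ref{lem6} to get $F_x(r)\le c_0N_x(r)$, then calibrate $\epsilon_1$ against $\epsilon_0$ and invoke Lemma \ref{lem7}. The paper simply takes $\epsilon_1=\epsilon_0/c_0$ where you leave the choice slightly more flexible; there is no substantive difference.
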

	\begin{proof}
		By the uniform bound on $|\chi_0\circ u|$ in Lemma \ref{lem1}, we have  $F_x(r) =N_x(r) \frac{f}{r^2} \le c_0 N_x(r)$. Take $\epsilon_1= \frac{\epsilon_0}{c_0}$. Then Lemma \ref{lem7} implies that $r(x) \ge \min\{\sqrt{\epsilon_1}, c_{\epsilon_1} \rho_0 \circ u (x)\} $.
	\end{proof}

	\begin{proof}[Proof of  Theorem \ref{thm0}]
		Let $\{(A_n, u_n)\}^{\infty}_{n=1}$ be the sequence of monopoles in Theorem \ref{thm0}.   After gauge transformations, we can assume that  $\{(A_n, u_n)\}^{\infty}_{n=1}$  are smooth.  Note that the functions $\rho_0  \circ u_n  $  are preserved under the gauge transformations.
		
		Suppose that there is an open submanifold $Y'$   such that $ {\uplim\limits_{n\to \infty}}\inf\limits_{Y'}\rho_0 \circ u_n \vert_{Y'} \ge c_{\diamondsuit}^{-1}$ for some constant $c_{\diamondsuit}>0$. 
		After passing to  a subsequence, this condition implies that the images of $\{u_n \vert_{Y'}\}_{n=1}^{\infty}$ are contained in a compact subset of $M$. Therefore, we have  $$| Hess\mu { \vert_{u_n\vert_{Y'}}}| + |Rm_M \vert_{u_n\vert_{Y'}} | + |K^{M,G}  \vert_{u_n \vert_{Y'}}|_{C^1} + |K^{M,Sp(1)}  \vert_{u_n \vert_{Y'}}|_{C^1}\le c_0.$$ (See Remark \ref{rmk1}.) As Corollary \ref{lem10} and the analysis in Section \ref{section5} are local, we can apply them to the equations over  $Y'$.
		
		For any compact subset $K \subset Y'$,   Lemma \ref{lem7} implies that $r_n(x) \ge c_K^{-1}$ over $K$, where $r_n(x)$ is the  $u_n$--version of  number defined in Definition \ref{def1}. Then Corollary \ref{lem10} deduces a uniform bound $|d_{A_n} u_n|(x) + |F_{a_n}|(x) \le C_K$ for any $x\in K$.
		
		
		By Theorem  B and the patching argument  of \cite{KW},  for any $p >3$, we can find a sequence of gauge transformations $g_n \in \mathcal{G}^{2, p}$ such that  the sequence   $\{g_n^* A_n -A_0 \}$ has a uniform bound on the  $W^{1, p}$--norm and $\{g_n^* A_n \}$  converges  in $W^{1,p}$  weakly  to $A \in \mathcal{A}^{1,p}$.  Write $g_n^*A_n= A_0 + \alpha_n$. Here we take $A_0$ to be a smooth approximation of $A$.  By Theorem 8.1 of \cite{KW},   we can assume that $d_{A_0}^*\alpha_n=0$ and $*\alpha_n \vert_{\partial K}=0$ after   gauge transformations.
		By the  Sobolev embedding theorem, we have  $|g_n^* A_n -A_0| \le c_0$.
		
		By Lemma 3.6.10 of \cite{MC},  $d_{g_n^*A_n} (g_n^{-1} u_n) \vert_{p}=d_{A_n} u_n \vert_{pg(p)}$. Thus $|d_{g_n^*A_n} (g_n^{-1} u_n)|$ are  still uniformly bounded. Hence, we have
		\begin{equation} \label{eq21}
			\begin{split}
				|d_{A_0} (g_n^{-1}u_n)| \le |d_{g_n^*A_n} (g_n^{-1} u_n) | +| K^{M,G}_{g_n^* A_n -A_0} \vert_{u_n}| \le c_0.
			\end{split}
		\end{equation}
		By a direct computation, we have
		\begin{equation}\label{eq25}
			\begin{split}
				d_{A_0}^*d_{A_0} \alpha_n =d_{g_n^*A_n}^* F_{g_n^*a_n} -d_{A_0}^* F_{a_0} - \frac{1}{2}d_{A_0}(*[\alpha_n \wedge \alpha_n]) - \frac{1}{2}*[\alpha_n \wedge F_{g_n^*a_n}].
			\end{split}
		\end{equation}
		Let $K'\subset K$ be a   slightly smaller  compact subset. Using Equation (\ref{eq25}), Lemma \ref{lem9},   and the Sobolev  embedding theorem, we have
		\begin{equation*}
			\begin{split}
				| \alpha_n|_{W^{2,p}(K')} &\le  c_0 |d_{A_0}^*d_{A_0} \alpha_n|_{L^p(K)} + c_0 |\alpha_n|_{W^{1,p}(K)}\\
				&\le  c_0 |d_{g_n^*A_n}^*F_{g_n^*a_n}|_{L^p(K)} + |\alpha_n|_{L^{p}(K)} |F_{a_n}|_{L^{\infty}(K)} \\
				&+ |\alpha_n|_{W^{1,p}(K)} |\alpha_n|_{L^{\infty}(K)} + c_0 |\alpha_n|_{W^{1,p}(K)}+ c_0 \\
				&\le  c_0 |K^{M, G} \vert_{u_n}|_{L^{\infty}(K)} |d_{A_n} u_n|_{L^{\infty}(K)} \\
				& + |\alpha_n|_{L^{p}(K)} |F_{a_n}|_{L^{\infty}(K)} +c_0 |\alpha_n|_{W^{1,p}(K)}   + c_0\\
				& \le c_0, \\
			\end{split}
		\end{equation*}
		The constant $c_0$ here also depends on $A_0$ and $K'$. By the Rellich's lemma, $\{g_n^*A_n\}_{n=1}^{\infty}$ converges to  $A$ strongly in the  $W^{1, p}$--norm over $K'$.

		Inequality  (\ref{eq21})  implies that  $\{(g_n^*A_n, g_n^{-1}u_n)\}_{n=1}^{\infty}$ converges weakly in $W^{1,p}$--norm  and converges in $C^0$ to  $(A, u)$.  The pair $(A, u)$ satisfies the generalised Seiberg--Witten equations over $K'$.  After a gauge transformation, we assume that $(A, u)$ is smooth in a slightly smaller compact subset (still call it $K'$). (See Corollary 5.3.3 of \cite{HS}.) For sufficiently large $n$, we can write $$g_n \cdot (A_n, u_n) =(A+ a_n, exp_u(v_n)),$$ where $(a_n, v_n) \in \Omega^1(Y, \mathfrak{g}_Q)^{2,p} \oplus W^{1,p}(Y, \pi_!u^*TM)$. Moreover,
		$ \{(a_n, v_n)\}_{n=1}^{\infty} $  converges in  $C^0$ to zero, $\{a_n \}_{n=1}^{\infty} $ converges in $W^{1,p}$ to zero and  $|\nabla_A^{TM} v_n | \le c_0$.
		
		After   gauge transformations,  we can further assume that $d_A ^* a_n =0$ and $*a_n \vert_{\partial K'}=0$  and the above estimates are still true. Therefore, the  generalised Seiberg--Witten equations   can be written as
		\begin{equation*}
			\begin{split}
				&D_A^{lin, u} v_n = Q_0(\nabla_A^{TM} v_n, v_n)+ Q_1(d_Au, v_n) +Q_2(v_n , a_n),\\
				&(d_A^*+d_A) a_n =*d_u\mu(v_n )  + Q_3(v_n, v_n) +Q_4(a_n, a_n),
			\end{split}
		\end{equation*}
		where $\{Q_i\}_{i=0}^4$ are certain  quadratic operators in the sense that $$|Q_i(x, y)|_{W^{k,p}} \le c_0 |x|_{W^{k,p}} |y|_{W^{k, p}}$$ for $kp>3$ or $k=0$ and $p=\infty.$ The standard elliptic bootstrapping argument implies that $(a_n, v_n)$ converges in  $C^{\infty}$ to zero over a  slightly smaller compact subset. (See Theorem 5.3.2 of \cite{HS}.)
	\end{proof}
	
	\section{4--dimensional  case} \label{section6}
	First of all, observe that  the 4--dimensional generalised Seiberg--Witten equations (\ref{eq20}) are  reduced to the Dirac equation  $D_Au =0$ when $dim G=0$.  We still have   the $Weitzenb\ddot{o}ck$ formula in dimension four. (See \cite{HS}.) Moreover, it becomes
	\begin{equation} \label{eq32}
		\begin{split}
			D_A^{lin, u^*} D_A u = \nabla_A^{TM, *} d_A  u+ \frac{s_X}{4} \chi_0 \circ u
		\end{split}
	\end{equation}
	when $dim G=0$.  Using the  $Weitzenb\ddot{o}ck$ formula (\ref{eq32}) and the assumption $\int_X \rho_0 \circ u \le c_{\heartsuit}$, one can deduce a  uniform bound on  $\rho_0 \circ u$ as in Lemma \ref{lem1}.  The functions $F_x$ and $N_x$ are replaced by
	\begin{equation*}
		\begin{split}
			F_x(r) =\frac{1}{r^2}\int_{B_r(x)}|d_Au|^2   \mbox{\ and \ } N_x(r) =\frac{r^3F_x(r)}{f_x(r)}.
		\end{split}
	\end{equation*}
	The number in Definition \ref{def1} at $x \in X$ is replaced  by  $$r(x)= \sup\{ r \in (0, \delta_0] \vert  \frac{1}{r^2 } \int_{B_r(x)} |d_A u|^2  \le \epsilon_0\}.$$ 
	The same argument can show that the functions $N_x(r)$ and $F_r(x)$ satisfy the monotonicity properties in  Lemmas \ref{lem3}, \ref{lem5}.   The computation now is simpler  due to  the lack of   curvature terms.

	
	\subsection*{Acknowledgment}
	This work is carried out at the   University of Adelaide, the author acknowledges the comfortable environment provided by the School of Mathematics and  the support from ARC Discovery project grant DP170101054 with Chief Investigators Mathai Varghese and David Baraglia.
	
	The author would like to thank Dr.Varun Thakre's helpful comments.  He also wants to thank the
	anonymous referee, whose comments and suggestions have greatly improved this paper.
	
 
 E-mail address: davidtree33@protonmail.com

\end{document}